\newcounter{zacountsec}
\newcommand{\eh}{\hfill}\newlength{\sperr}
\newcommand{\Section}[1]{{\stepcounter{zacountsec}\vspace{3mm}%
\hspace*{18mm}\normalsize\bf\arabic{zacountsec}.\parbox[t]{150mm}{#1}}}
\newenvironment{thm}[2]{\begin{sloppypar}%
{#1 #2.}\em{}}%
{\end{sloppypar}}
\newcommand{\proof}{\hspace*{9mm}{\settowidth{\sperr}{\rm  Proof
}%
\parbox[t]{1.3\sperr}{\rm  P\eh r\eh o\eh o\eh f} }}
\newcounter{zalit}
\newlength{\addro}\newlength{\addrt}
\def\L2{{L_2(\Gamma)}}
\def\e2{{e_2(g,\,\Gamma)}}
\renewcommand*{\@biblabel}[1]{\hfill#1.}
\titleformat{\subsection}{\centering\normalfont\normalsize\itshape}{\thesubsection.}{1em}{}
\newcommand{\dotprod}[2]{\langle #1,#2 \rangle}
\definecolor{PineGreen}{HTML}{008B72}
\newcommand{\greencheck}{\color{PineGreen}\ding{51}}
\newcommand{\redx}{\color{red} \ding{55}}
  \providecommand{\bb}{\mathbf{b}}
  \providecommand{\ee}{\mathbf{e}}
  \renewcommand{\gg}{\mathbf{g}}
  \providecommand{\nn}{\mathbf{n}}
  \providecommand{\uu}{\mathbf{u}}
\providecommand{\mycomment}[3]{\todo[caption={},size=footnotesize,color=#1!20, inline]{\textbf{#2: }#3}}%
\providecommand{\inlinecomment}[3]{%
  {\color{#1}#2: #3}}%
\newcommand\commenter[2]%
\newcommand\csname i#1\endcsname[1]{\inlinecomment{#2}{#1}{##1}}
\newcommand\csname #1\endcsname[1]{\mycomment{#2}{#1}{##1}}
\newcommand{\circledOne}{\text{\ding{172}}}
\newcommand{\circledTwo}{\text{\ding{173}}}
\newcommand{\circledThree}{\text{\ding{174}}}
\newcommand{\circledFour}{\text{\ding{175}}}
\newcolumntype{g}{>{\columncolor[HTML]{D9FFAD}} c}
\newcolumntype{r}{>{\columncolor[HTML]{ffb7b2}} c}
\newcolumntype{b}{>{\columncolor[HTML]{b8e0ff}} c}
\newtheorem{theorem}{Theorem}
\newtheorem{lemma}{Lemma}
\newtheorem{definition}{Definition}
\newtheorem{remark}{Remark}
\newtheorem{assumption}{Assumption}
\newcommand{\al}[1]{{\color{blue}#1}}
\newcommand{\newal}[1]{{\color{black}#1}}
\begin{document}
\vspace*{0mm}

\begin{center}
\large\bf HIGHLY SMOOTH ZEROTH-ORDER METHODS FOR SOLVING \\ \vspace*{1mm}
OPTIMIZATION PROBLEMS UNDER THE PL CONDITION\footnote{The research was supported by Russian Science Foundation (project No. 21-71- 30005), https://rscf.ru/en/project/21-71-30005/.}
\end{center} 

\vspace*{1mm}
\begin{center}
{\large\rm\bf\copyright\,\,2023 \,\,\,\,\,\,\,  
A.\,V.~Gasnikov$^{a, b, c}$,
A.\,V.~Lobanov$^{a, c,*}$\footnote{The main contribution to the article belongs to Aleksandr Lobanov~\textless{}lobbsasha@mail.ru\textgreater{}. According to the rules of the journal, the authors of the article are arranged in \textbf{alphabetical} order.} and
F.\,S.~Stonyakin$^{a, d}$}
\\
\vspace*{3mm}
{\it $^{a}$ 
141700 Dolgoprudny, Institutskiy per., 9, Moscow Institute of Physics and Technology, Russia \\
$^{b}$ 
121205 Moscow, B. Boulevard 30, bld. 1, Skolkovo Institute of Science and Technology, Russia \\
$^{c}$ 
125047 Moscow, A. Solzhenitsyn st., 25, Institute for System Programming of the RAS, Russia \\
$^{d}$ 
295007 Simferopol, Prospekt Vernadskogo 4, V.I. Vernadsky
Crimean Federal University, Russia \\
$^{*}$e--mail: lobbsasha@mail.ru 
} \\
\medskip
Received \,\, \,\,\,\,2023, revised \,\,\,\,\,2023, accepted \,\,\,\,\,\, 2023.
\vspace*{2mm}
\end{center}

\noindent
{\small {\bf Abstract} -- In this paper, we study the black box optimization problem under the Polyak--Lojasiewicz~(PL) condition, assuming that the objective function is not just smooth, but has higher smoothness. By using "kernel-based'' approximations instead of the exact gradient in \newal{the} Stochastic Gradient Descent method, we improve the best\newal{-}known results of convergence in the class of gradient-free algorithms solving problems under the PL condition. We generalize our results to the case where a zeroth-order oracle returns a function value at a point with some adversarial noise. We verify our theoretical results on the example of solving a system of nonlinear equations.
}

\vspace*{2mm}
\noindent
{\bf Keywords:} 
{\small Black-box optimization, gradient-free methods, kernel approximation, maximum noise level.
}

\section{Introduction}\label{Section:Introduction}

The black box problem is a fundamental optimization problem when the objective function has only a zero-order oracle \cite{Rosenbrock_1960}. Recently, this problem has received significant attention in the setting of reinforcement learning \cite{Choromanski_2018, Mania_2018}, federated learning \cite{Patel_2022, Lobanov_2022}, \newal{and} deep learning \cite{Chen_2017, Gao_2018}. Particularly in applied problems of multi-armed bandit \cite{Shamir_2017, Lattimore_2021}, online optimization \cite{Agarwal_2010, Bach_2016, Tsybakov_2022}, huge-scale optimization~\cite{Bubeck_2015,Bogolubsky_2016}, and hyperparameter tuning \cite{Hernandez_2014, Nguyen_2022}. In addition, the black box problem arises if the information about the derivatives is too expensive or not available \cite{conn2009introduction}.

To solve such problems in the convex case, there are various techniques for developing optimal gradient-free algorithms based on first-order optimization algorithms, optimal by three criteria at once: oracle complexity, iteration complexity, and maximum level of admissible noise still allowing to guarantee certain accuracy \cite{Gasnikov_2022}. The main idea of which \newal{is} to calculate the gradient approximation instead of an exact gradient \cite{Wasan_2004}. \newal{For example, $L_1$, and $L_2$} randomized approximations \cite{duchi2015optimal, Nesterov_2017,  Shamir_2017, gasnikov2016gradient, Tsybakov_2022} and "kernel-based" approximation \cite{Polyak_1990, Bach_2016, Tsybakov_2020, Novitskii_2021} are usually used for the smooth case. Where~the~"kernel-based" approximation takes advantage of higher-order smoothness as opposed to the $L_2$~and~$L_1$~randomized approximations. \newal{For} the non-smooth case, there are smoothing techniques via~$L_1$~and~$L_2$~randomized approximation \cite{Gasnikov_ICML, Lobanov_2022}. 

In contrast to the convex case, where algorithms have been intensively appearing and being studied in theory and practice in recent years \cite[e.g.,][and see above]{Karimireddy_2018}, the analysis of the black box problem in the nonconvex case is just beginning to be actively studied \cite{Ghadimi_2013, Hajinezhad_2017, Liu_2018}. One such problem that \newal{is} frequently common in the literature lately is the smooth (nonconvex) optimization problem under the Polyak--Lojasiewicz condition \cite{Polyak_1963, Lojasiewicz_1963, Karimi_2016, Belkin_2021, Stonyakin_2022}. In this formulation of the problem, \cite{Stich_2020} studied the biased Stochastic Gradient Descent (SGD) method and also proposed~its~gradient-free~counterpart.

We focus on solving the smooth black-box optimization problem (in particular, the nonconvex optimization) under the Polyak--Lojasiewicz condition. We propose an optimal zero-order algorithm (see Algorithm \ref{algo: ZO-MB-SGD_algorithm}) based on Mini-batch SGD (added batching to biased Stochastic Gradient Descent~\cite{Stich_2020} to improve iteration complexity). Using kernel approximation as a technique for developing a gradient-free algorithm, we show in theory and in practice a significant improvement in convergence rate estimates. We provide an extended analysis of Algorithm \ref{algo: ZO-MB-SGD_algorithm}, namely we consider a (close to reality) stochastic formulation of the problem with adversarial noise \cite{Risteski_2016, Vasin_2021, Stepanov_2021}.

\begin{table*}
\begin{minipage}{\textwidth}
\caption{Comparison of convergence results of Zero-Order Mini-batch SGD method via Kernel-based approximation, "Kernel approx." (This work) with the existing counterpart via Gaussian approximation, "Gaus. approx."~\cite{Stich_2020}. Notation: $d$ = dimension, $\delta(x)$~=~adversarial deterministic noise, $\xi$ = adversarial stochastic noise, $\Delta$ = level noise, $\beta$ = smoothness order parameter.}
\label{tab:table_errorfloor} 
\centering
\resizebox{\linewidth}{!}{
\begin{tabular}{ccccccc}\toprule
\multirow{2}{*}{Case} & \multirow{2}{*}{Zero-order oracle} & \multicolumn{2}{c}{Error floor} & \multirow{2}{*}{Stochastic?} &  \multicolumn{2}{c}{Adversarial noise} \\  \cmidrule{3-4} \cmidrule{6-7}
  &  & Gaussian approx. &  Kernel approx.  & & ADN? & ASN? \\ \midrule

deterministic & $\Tilde{f}(x) = f(x) + \delta(x)$  &  $\mathcal{O} \left( d^2 \Delta  \right)$ & $\mathcal{O} \left( d^{2} \Delta^{\al{\frac{2(\beta-1)}{\beta}}} \right)$ &  \redx &  \greencheck & \redx \\ \cmidrule{1-7}
two-point  & $\Tilde{f}(x,\xi) = f(x, \xi) + \delta(x)$ &  $\mathcal{O} \left( d^2 \Delta \right)$ & $\mathcal{O} \left( d^{2} \Delta^{\al{\frac{2(\beta-1)}{\beta}}} \right)$ &  \greencheck & \greencheck & \redx \\ \cmidrule{1-7}
\multirow{2}{*}{one-point} & $\Tilde{f}(x, \xi) = f(x) + \xi$ & $\varepsilon$ & $\varepsilon$ & \greencheck  & \redx & \greencheck \\ \cmidrule{2-7}
 & $\Tilde{f}(x,\xi) = f(x) + \xi + \delta(x)$ & $\mathcal{O} \left( d^2 \Delta  \right)$ & $\mathcal{O} \left( d^{2} \Delta^{\al{\frac{2(\beta-1)}{\beta}}} \right)$ & \greencheck & \greencheck & \greencheck \\ \bottomrule
\end{tabular}
}
\end{minipage}
\end{table*}

\subsection{Contribution}\label{Subsection:Contribution}
\begin{itemize}
    \item We present a Zero-Order Mini-batch SGD method that significantly improves existing estimates of the convergence results (in particular, estimates of the error floor, see the "deterministic" case of the zero-order oracle (first line) in Table \ref{tab:table_errorfloor} and Table \ref{tab:table_compare}) in a class of gradient-free algorithms for solving smooth nonconvex problems under the PL condition.
    
    \item We provide an extended theoretical analysis of the Zero-Order Mini-batch SGD algorithm by considering a stochastic setting using a gradient approximation with one-point and two-point feedback, when the zero-order oracle \newal{is} corrupted by an bounded adversarial deterministic ("ADN")  and stochastic ("ASN") noise. We show that also in the stochastic setting a Zero-Order Mini-batch Stochastic Gradient Descent method is superior to the existing counterpart~\cite{Stich_2020} (see stochastic cases of the zero-order oracle $\Tilde{f}(x,\xi)$ in Table \ref{tab:table_errorfloor} and Table~\ref{tab:table_compare}).
    
    \item We empirically verify our theoretical results by comparing the Zero-Order Mini-batch Stochastic Gradient Descent method with the existing algorithm using a typical example for problems under the Polyak--Lojasiewicz condition: solving a system of nonlinear equations.
    
    \item We demonstrate the effectiveness of the randomized approximation in practice by comparing it to the Gaussian approximation from \cite{Stich_2020} and show the advantages of "kernel-based" approximation in experiments by additionally comparing it to the $L_2$ approximation.
\end{itemize}

\subsection{Paper organization}
This article has the following structure. In Section~\ref{Section:Related_Work} we provide related works. We describe the statement of the problem in Section~\ref{Section:Setup}. We present the optimal gradient-free Algorithm~\ref{algo: ZO-MB-SGD_algorithm} in Section~\ref{Section:Convergence_results}. \newal{In} Section~\ref{Section:Extended_analysis}, we extend our analysis of the Algorithm~\ref{algo: ZO-MB-SGD_algorithm} to a stochastic setting with different models of adversarial noise.  In Section~\ref{Section:Discussion}, we discuss the results. While in Section~\ref{Section:Experiments} we analyze the effectiveness of our approach on a practical experiment. Section~\ref{Section:Conclusions} concludes the paper. We give a detailed proof of the Theorems and Lemmas in the Appendix~(Supplementary~Materials).


\begin{table}
\begin{minipage}{\textwidth}
\caption{Comparison of iteration complexity \#$N$ oracle complexity, \#$T$ and maximum noise~level~\#$\Delta$ of the algorithm with the following approaches: Kernel and Gaussian approximations for~zero-order~oracle~cases \circledOne~:~$\Tilde{f}(x) = f(x) + \delta(x)$, \circledTwo~:~$\Tilde{f}(x,\xi) = f(x, \xi) + \delta(x)$, \circledThree~:~$\Tilde{f}(x, \xi) = f(x) + \xi$, \circledFour~:~$\Tilde{f}(x,\xi) = f(x)+\xi+\delta(x)$. \small In both approaches, the dependence of the estimates on the batch size $B$ is presented. Notation: $\beta$ = order of smoothness, $\varepsilon$~=~accuracy of the solution to the problem, $d$ = dimension, $\mu$ = Polyak--Lojasiewicz~condition~constant.}
\label{tab:table_compare} 
\centering
\resizebox{\linewidth}{!}{
\begin{tabular}{ccccccccc}\toprule
\multirow{2}{*}{ Case  } & \multirow{2}{*}{ Batch} & \multicolumn{3}{c}{Gaussian approximation} & \phantom{-} & \multicolumn{3}{c}{Kernel approximation}  \\  \cmidrule{3-5} \cmidrule{7-9}
 & Size & \# $N$ & \# $T$  & \#  $\Delta$  &\phantom{-} & \# $N$ & \# $T$ & \# $\Delta$ \\ \midrule
\multirow{2}{*}{\circledOne, \circledTwo, \circledFour} & $B \in [1, \beta^3 d]$ & $\Tilde{\mathcal{O}} \left( \frac{d}{B}  \mu^{-1}\right)$ & $\Tilde{\mathcal{O}} \left( d  \mu^{-1}\right)$  & $\Delta \leq \mu \varepsilon d^{-3/2}$ &   & $\Tilde{\mathcal{O}} \left( \frac{d}{B}  \mu^{-1}\right)$ & $\Tilde{\mathcal{O}} \left( d  \mu^{-1}\right)$& $\Delta \leq \mu \varepsilon d^{-3/2}$  \\ \cmidrule{2-9}
 & \cellcolor[HTML]{b8e0ff} $B > \beta^3 d$ & $\Tilde{\mathcal{O}} \left(\mu^{-1} \right)$  & \cellcolor[HTML]{ffb7b2} $\max \left\{\Tilde{\mathcal{O}} \left(\mu^{-1} B \right), \Tilde{\mathcal{O}} \left( \frac{d^{4} \Delta^2}{ \varepsilon^{2} \mu^{3}}\right) \right\}$  & \cellcolor[HTML]{ffb7b2} $\Delta \leq  \mu \varepsilon d^{-2}$ &   & $\Tilde{\mathcal{O}} \left(\mu^{-1} \right)$ & \cellcolor[HTML]{D9FFAD} $\max \left\{\Tilde{\mathcal{O}} \left(\mu^{-1} B \right), \Tilde{\mathcal{O}} \left( \frac{d^{2 + \frac{2}{\beta-1}}\Delta^2}{ \varepsilon^{\frac{\beta}{\beta-1}} \mu^{\frac{2\beta -1}{\beta-1}}}\right) \right\}$& \cellcolor[HTML]{D9FFAD}$\Delta \leq \frac{\left( \mu \varepsilon \right)^{\frac{\beta}{2(\beta - 1)}}}{d^{\frac{\beta}{\beta - 1}}}$  \\ \cmidrule{1-9}
 \multirow{2}{*}{\circledThree}& $B \in [1, \beta^3 d]$ & $\Tilde{\mathcal{O}} \left( \frac{d}{B}  \mu^{-1}\right)$ & $\Tilde{\mathcal{O}} \left( d  \mu^{-1}\right)$  & $\Delta \leq \mu \varepsilon d^{-1}$ &   & $\Tilde{\mathcal{O}} \left( \frac{d}{B}  \mu^{-1}\right)$ & $\Tilde{\mathcal{O}} \left( d  \mu^{-1}\right)$& $\Delta \leq \mu \varepsilon d^{-1}$  \\ \cmidrule{2-9}
 & \cellcolor[HTML]{b8e0ff}  $B > \beta^3 d$ & $\Tilde{\mathcal{O}} \left(\mu^{-1} \right)$  & \cellcolor[HTML]{ffb7b2}$\max \left\{\Tilde{\mathcal{O}} \left(\mu^{-1} B \right), \Tilde{\mathcal{O}} \left( \frac{d^{4} \Delta^2}{ \varepsilon^{2} \mu^{3}}\right) \right\}$  & \cellcolor[HTML]{ffb7b2} $\Delta \leq  \mu \varepsilon d^{-2} B^{1/2}$ &   & $\Tilde{\mathcal{O}} \left(\mu^{-1} \right)$ & \cellcolor[HTML]{D9FFAD} $\max \left\{\Tilde{\mathcal{O}} \left(\mu^{-1} B \right), \Tilde{\mathcal{O}} \left( \frac{d^{2 + \frac{2}{\beta-1}}\Delta^2}{ \varepsilon^{\frac{\beta}{\beta-1}} \mu^{\frac{2\beta -1}{\beta-1}}}\right) \right\}$& \cellcolor[HTML]{D9FFAD} $\Delta \leq \frac{\left( \mu \varepsilon \right)^{\frac{\beta}{2(\beta - 1)}}}{d^{\frac{\beta}{\beta - 1}}} B^{1/2}$   \\ \bottomrule
\end{tabular}
}
\end{minipage}
\end{table}

\section{Related Work}\label{Section:Related_Work}

\paragraph{Polyak--Lojasiewicz condition.} The field of research on the nonconvex problem under the Polyak--Lojasiewicz condition can be traced back to \cite{Polyak_1963}, where it is shown that this condition is sufficient for the gradient descent to achieve \newal{a} global linear convergence rate. Recently, problems with Polyak--Lojasiewicz condition \newal{have been} actively investigated. For instance, \cite{Stonyakin_2022} proposed a new formulation of the problem under the Polyak--Lojasiewicz condition, including a compromise and an early stopping rule to guarantee its achievement. \newal{Already in} the new statement of the problem, \cite{Kuruzov_2022} proposed a fully adaptive gradient algorithm (with respect to the Lipschitz constant of the gradient and the noise level in the gradient) for solving problems with this condition. However, in this work we consider the standard statement of the problem under the Polyak--Lojasiewicz condition \cite[e.g. see][]{Karimi_2016, Yue_2022}. Also\newal{,} it is shown in \cite{Yue_2022} that non-accelerated algorithms are optimal for smooth problems under the Polyak--Lojasiewicz condition. Therefore, in this paper\newal{,} we consider non-accelerated first-order optimization algorithms as a base for developing optimal gradient-free~methods.

\paragraph{SGD type algorithms.} Many works \cite{Zhang_2004, Bottou_2010, Alistarh_2018, Wangni_2018, Stich_2019, Varre_2021} investigated Stochastic Gradient Descent in various settings. \newal{For} black-box problems, this algorithm and its modifications are the basis for developing gradient-free algorithms. For instance, \cite{Lobanov_2022} used the following first-order optimization algorithms as a basis for developing gradient-free algorithms in the federated learning setting: Minibatch Accelerated~SGD and Single-Machine Accelerated~SGD \cite[from ][]{Woodworth_2021}, and Federated Accelerated Stochastic Gradient Descent \cite[from ][]{Yuan_2020}, which are accelerated modifications of the SGD, namely the~AC-SA~\cite{Lan_2012}. In a nonconvex optimization problem with a Polyak--Lojasiewicz condition, \cite{Stich_2020}~studied a biased Stochastic Gradient Descent, where the oracle has access to biased and noisy gradient estimates, and showed that Stochastic Gradient Descent methods can generally converge only to the neighborhood of the solution. In this paper, we use the biased Stochastic Gradient Descent algorithm \cite{Stich_2020} as a basis for developing \newal{an} optimal zero-order method for solving a smooth nonconvex optimization problem, assuming that the Polyak--Lojasiewicz condition is satisfied.

\paragraph{Kernel approximation.} The works \cite{Polyak_1990, Bach_2016,Tsybakov_2020,Novitskii_2021} investigated and used kernel approximation as a technique for creating gradient-free algorithms. In the survey \cite{Gasnikov_2022} showed that the significant difference of this approximation from others is taking into account the advantages of \newal{the} high order of smoothness of the objective function, i.e. satisfying the \newal{H\"{o}lder condition}. All these works \cite{Bach_2016,Tsybakov_2020,Novitskii_2021} used the central finite difference (CFD) scheme instead of the forward finite difference scheme (FFD) in kernel approximation. It turns out there is an explanation, \cite{Scheinberg_2022} showed that in a \newal{smooth case,} one should use CFD, not FFD. In this paper, we use the kernel approximation since we assume that our function has higher smoothness, in contrast to \cite{Stich_2020}, where only smoothness is assumed.

\paragraph{Stochastic optimization.} Stochastic optimization problems have received special attention in the literature \cite{Stich_2019, Gorbunov_2020, Woodworth_2020, Mishchenko_2020, Cohen_2021, Gorbunov_2022}. For example, \cite{Gorbunov_2021} investigated a stochastic problem with non-sub-Gaussian (heavy-tailed) noise in the convex case, and \cite{Gorbunov_2022} investigated in the convex-concave case. For black-box problems, \cite{Tsybakov_2020, Novitskii_2021} studied a one-point gradient approximation with additive stochastic noise. \cite{Dvinskikh_2022} studied a two-point gradient approximation corrupted by an adversarial deterministic noise, and \cite{Lobanov_2022} studied a one-point gradient approximation with the same adversarial deterministic noise. In this paper\newal{,} we generalize our analysis to the stochastic optimization problem and consider gradient approximation with one-point and two-point feedback obtained via function realizations. We consider two cases of noise: noise as defined in \cite{Tsybakov_2020, Novitskii_2021, Tsybakov_2022} and noise as defined in \cite{Gasnikov_ICML, Dvinskikh_2022}. 

To the best of our knowledge for the moment there are lack of results around gradient-free methods for problems with Polyak--Lojasiewicz condition. The known results \cite{ehrhardt2018geometric,malik2019derivative,luo2019stochastic} are dominated by~\cite{Stich_2020} as we consider to be state of the art results.


\section{Setup}\label{Section:Setup}

    We study black-box optimization problems of the form:
    \begin{equation}
        \label{Init_Problem}
        f^* := \min_{x \in Q \subset \mathbb{R}^d} f(x),
    \end{equation}
    where $f: \mathbb{R}^d \rightarrow \mathbb{R}$ is \newal{a} function that we want to minimize over a closed convex subset $Q$ of $\mathbb{R}^d$. The problem \eqref{Init_Problem} is a general statement problem in the field of optimization. In order to narrow down the class of problems to be solved, let us define the function class using the following assumptions. 

    \subsection{Notation}\label{Subsection:Notation}

    We use $\dotprod{x}{y}:= \sum_{i=1}^{d} x_i y_i$ to denote standard inner product of $x,y \in \mathbb{R}^d$. We denote $l_p$~-~norms ($p~\geq~1$) in $\mathbb{R}^d$ as $\| x\|_p := \left( \sum_{i=1}^d |x_i|^p \right)^{1/p}$. \newal{Particularly,} for $l_2$-norm in $\mathbb{R}^d$ it follows $\| x \|_2~:=~\sqrt{\dotprod{x}{x}}$. We denote $l_p$-sphere as $S_p^d(r):=\left\{ x \in \mathbb{R}^d : \| x \|_p = r \right\}$. Operator $\mathbb{E}[\cdot]$ denotes full mathematical expectation. We \newal{use the} notation $\tilde{O} (\cdot)$ to hide logarithmic factors. 

    \subsection{Assumptions on the objective function}\label{Subsection:Assumptions_1}
    For all our theoretical results we assume that $f$ is not just smooth, but has a high order of smoothness.
    \begin{assumption}[Higher order smoothness] \label{Assumption_1}
        Let $l$ denote \newal{the} maximal integer number strictly less than~$\beta$. Let $\mathcal{F}_\beta(L)$ denote the set of all functions $f: \mathbb{R}^d \rightarrow \mathbb{R}$ which are differentiable $l$ times and for~all~$x,z \in Q$ the H\"{o}lder-type condition \newal{holds}:
        \begin{equation*}
            \left| f(z) - \sum_{0 \leq |n| \leq l} \frac{1}{n!} D^n f(x) (z-x)^n \right| \leq L_\beta \| z - x \|^\beta_2,
        \end{equation*}
        where $L_\beta>0$, the sum is over multi-index $n~=~(n_1, ..., n_d) \in \mathbb{N}^d$, we used the notation $n!~=~n_1! \cdots n_d!$, $|n| = n_1 + \cdots + n_d$, and $\forall v = (v_1, ..., v_d) \in \mathbb{R}^d$ we defined
        \begin{equation*}
            D^n f(x) v^n = \frac{\partial ^{|n|} f(x)}{\partial^{n_1}x_1 \cdots \partial^{n_d}x_d} v_1^{n_1} \cdots v_d^{n_d}. 
        \end{equation*}
    \end{assumption}
    Also we assume the Polyak--Lojasiewicz condition ($\mu$-PL) with parameter $\mu > 0$. 
    \begin{assumption}[$\mu$-PL]
    \label{Assumption_2}
        The function is differentiable and there exists constant $\mu > 0$ s.t. $\forall x\in\mathbb{R}^d$
        \begin{equation}\label{eq:Assumption_2}
            \| \nabla f(x) \|^2_2 \geq 2 \mu (f(x) - f^*).
        \end{equation}
    \end{assumption}
    
    Assumption \ref{Assumption_1} is quite common in the literature \cite[e.g.][]{Bach_2016,Tsybakov_2020,Novitskii_2021}. We introduced this assumption in order to take advantage of the properties of higher smoothness. As far as we know Assumption~\ref{Assumption_2} was introduced by \cite{Polyak_1963}. Functions satisfying this assumption are called gradient-dominated functions~\cite{Nesterov_2006}.

\subsection{Assumptions on the biased gradient oracle}\label{Subsection:Assumptions_2}

    We now formulate the standard assumptions about the gradient oracle for the biased SGD method \cite{Stich_2020}. To do this, we start by introducing the following definition.

    \begin{definition}[Biased Gradient Oracle]
    \label{Definition_1}
        A map $\mathbf{g}~:~\mathbb{R}^d~\times~\mathcal{D} \rightarrow \mathbb{R}^d$ s.t.
        \begin{equation}\label{eq:Definition_1}
            \mathbf{g}(x,\xi) = \nabla f(x) + \mathbf{b}(x) + \mathbf{n}(x, \xi)
        \end{equation}
        for a bias $\mathbf{b}: \mathbb{R}^d \rightarrow \mathbb{R}^d$ and zero-mean noise $\mathbf{n}: \mathbb{R}^d \times \mathcal{D} \rightarrow \mathbb{R}^d$, that is $\mathbb{E}_\xi \mathbf{n}(x, \xi) = 0, \forall x \in \mathbb{R}^d$.
    \end{definition}
    We assume that this gradient oracle has bias and noise, and they are bounded. 
    \begin{assumption}[($M,\sigma^2$)-bounded noise]
    \label{Assumption_3}
        There exists constants $M, \sigma^2 \geq 0$ such that $\forall x \in \mathbb{R}^d$
        \begin{equation}\label{eq:Assumption_3}
            \mathbb{E}_\xi \| \mathbf{n}(x, \xi) \|^2_2 \leq M \| \nabla f(x) + \mathbf{b}(x)\|^2_2 + \sigma^2.
        \end{equation}
    \end{assumption}
    \begin{assumption}[Bounded bias]
    \label{Assumption_4}
        There exists constants $0 \leq m < 1$, and $\zeta^2 \geq 0$ s.t. $\forall x \in \mathbb{R}^d$
        \begin{equation}\label{eq:Assumption_4}
            \| \mathbf{b}(x) \|^2_2 \leq m \| \nabla f(x) \|^2_2 + \zeta^2.
        \end{equation}
    \end{assumption}
    
    Many prior work in the context stochastic optimization often assumed the bounded noise and bounded bias. For example, the Assumption \ref{Assumption_3} is similar as in \cite{Stich_2019}. In the case without bias, Assumption~\ref{Assumption_3} is referred to as the strong growth condition \cite[e.g. ][]{Vaswani_2019}. \newal{Also} the Assumption \ref{Assumption_4} was used by~\cite{Hu_2021}. 
    
    The assumptions introduced in this subsection are necessary to use the following auxiliary lemma, since our \newal{approach to creating a gradient-free method} is based on \newal{the} SGD algorithm~\cite{Stich_2020} (see Appendix~\ref{Appendix:proof_Lemma}). 
    
    \begin{lemma}\label{Itog_theorem_all_in_text}
       Let $\{ x_k\}_{k \geq 0}$ denote the iterates of Algorithm SGD \cite{Stich_2020} with batching, function $f$ satisfy Assumptions \ref{Assumption_1}-\ref{Assumption_2} with $\beta = 2$, and the gradient oracle \eqref{eq:Definition_1} satisfy Assumptions \ref{Assumption_3}-\ref{Assumption_4}. Then there exists step size $\eta~\leq~ \frac{1}{(M+1) L_2}$ such that it \newal{holds} for all $ N \geq 0$ and an arbitrary batch size $B$
       \begin{equation*}
            \mathbb{E}[f(x_N)] - f^* \leq (1 - \eta \mu (1- m))^N (f(x_0) - f^*) + \frac{ \zeta^2}{2 \mu (1- m)}  + \frac{\eta L_2 \sigma^2}{2 B \mu (1- m)}  .
        \end{equation*}
    \end{lemma}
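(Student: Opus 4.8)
The plan is to establish a one-step contraction of the form $\mathbb{E}[f(x_{k+1})] - f^* \leq (1 - \eta\mu(1-m))(\mathbb{E}[f(x_k)] - f^*) + C$ and then unroll it over $N$ iterations. First I would write the mini-batch update as $x_{k+1} = x_k - \eta\,\mathbf{g}_B(x_k)$, where $\mathbf{g}_B(x_k) = \frac{1}{B}\sum_{i=1}^{B}\mathbf{g}(x_k,\xi_i)$ averages $B$ independent calls to the biased oracle \eqref{eq:Definition_1}. Since $\beta = 2$ in Assumption \ref{Assumption_1} reduces to $L_2$-smoothness of $f$ (Lipschitz gradient), the descent lemma gives
\[
f(x_{k+1}) \leq f(x_k) - \eta\langle \nabla f(x_k), \mathbf{g}_B(x_k)\rangle + \frac{L_2\eta^2}{2}\|\mathbf{g}_B(x_k)\|_2^2 .
\]

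Next I would take the conditional expectation over the batch. Averaging leaves the bias unchanged, $\mathbb{E}[\mathbf{g}_B(x_k)] = \nabla f(x_k) + \mathbf{b}(x_k)$, which I abbreviate $\bar{\mathbf{g}}(x_k)$, while independence shrinks the noise variance by a factor $B$, so Assumption \ref{Assumption_3} yields $\mathbb{E}\|\mathbf{g}_B(x_k)\|_2^2 \leq \|\bar{\mathbf{g}}(x_k)\|_2^2 + \frac{1}{B}(M\|\bar{\mathbf{g}}(x_k)\|_2^2 + \sigma^2) \leq (M+1)\|\bar{\mathbf{g}}(x_k)\|_2^2 + \frac{\sigma^2}{B}$, where the last step uses $B \geq 1$ in the worst case $M/B \leq M$. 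The step-size condition $\eta \leq \frac{1}{(M+1)L_2}$ is tailored precisely so that $\frac{L_2\eta^2(M+1)}{2} \leq \frac{\eta}{2}$, collapsing the quadratic term and leaving $-\eta\langle \nabla f(x_k), \bar{\mathbf{g}}(x_k)\rangle + \frac{\eta}{2}\|\bar{\mathbf{g}}(x_k)\|_2^2 + \frac{L_2\eta^2\sigma^2}{2B}$. The crucial algebraic step is then to substitute $\bar{\mathbf{g}} = \nabla f + \mathbf{b}$ and observe that the cross terms cancel exactly: $-\langle \nabla f, \bar{\mathbf{g}}\rangle + \frac{1}{2}\|\bar{\mathbf{g}}\|_2^2 = -\frac{1}{2}\|\nabla f\|_2^2 + \frac{1}{2}\|\mathbf{b}\|_2^2$.

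Finally I would control the residual bias via Assumption \ref{Assumption_4}, $\|\mathbf{b}(x_k)\|_2^2 \leq m\|\nabla f(x_k)\|_2^2 + \zeta^2$, which turns the coefficient of $\|\nabla f(x_k)\|_2^2$ into $-\frac{\eta(1-m)}{2}$ and produces the additive $\frac{\eta\zeta^2}{2}$ term. Invoking the PL inequality \eqref{eq:Assumption_2} to replace $\|\nabla f(x_k)\|_2^2$ by $2\mu(f(x_k)-f^*)$ delivers the one-step recursion with contraction rate $1 - \eta\mu(1-m)$ and constant $C = \frac{\eta\zeta^2}{2} + \frac{L_2\eta^2\sigma^2}{2B}$. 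Taking full expectation, unrolling over $N$ steps, and bounding the geometric sum $\sum_{k=0}^{N-1}(1-\eta\mu(1-m))^k \leq \frac{1}{\eta\mu(1-m)}$ converts $C$ into the two error-floor terms $\frac{\zeta^2}{2\mu(1-m)} + \frac{\eta L_2\sigma^2}{2B\mu(1-m)}$, matching the claim. I do not anticipate a deep obstacle; the only points demanding care are the exact cross-term cancellation and using the step-size bound in the worst case $M/B \leq M$, so that the estimate holds for an arbitrary batch size $B$ while $B$ enters only through the $\sigma^2$ variance term.
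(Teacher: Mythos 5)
Your proposal is correct and follows essentially the same route as the paper's proof in Appendix~\ref{Appendix:proof_Lemma}: the $L_2$-descent inequality, the bias--variance split $\mathbb{E}\|\mathbf{g}_k\|_2^2=\mathbb{E}\|\mathbf{g}_k-\mathbb{E}[\mathbf{g}_k]\|_2^2+\|\mathbb{E}[\mathbf{g}_k]\|_2^2$ combined with Assumptions~\ref{Assumption_3}--\ref{Assumption_4}, the exact cancellation $-\langle\nabla f,\nabla f+\mathbf{b}\rangle+\tfrac12\|\nabla f+\mathbf{b}\|_2^2=-\tfrac12\|\nabla f\|_2^2+\tfrac12\|\mathbf{b}\|_2^2$ enabled by the step-size choice $\eta\leq\frac{1}{(M+1)L_2}$, then the PL inequality and a geometric-sum unrolling. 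If anything, your explicit handling of the mini-batch from the outset (variance reduced by $B$, with $M/B\leq M$ absorbed into the $(M+1)$ factor so the bound holds for arbitrary $B$) is more careful than the paper, which performs the one-step analysis without batching and only remarks at the end that it adds batching to divide the $\sigma^2$ term by $B$.
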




\section{Zero-Order Mini-batch SGD}\label{Section:Convergence_results}

    In this section\newal{,} we present our approach \newal{to solving problem} \eqref{Init_Problem} when the gradient oracle \eqref{eq:Definition_1} has no information about the derivatives of the objective function. The main idea of our approach is to create an optimal (on oracle complexity, iteration complexity, and maximum level of noise) gradient-free algorithm based on the first-order method (namely, Stochastic Gradient Descent). To begin, we introduce an approximation of the gradient oracle \eqref{eq:Definition_1} via a zero-order oracle (value of the objective function $f(x)$ with some adversarial deterministic noise~$\delta(x)$ such that $|\delta(x)| \leq \Delta$ and $\Delta > 0$):
    \begin{equation}\label{zero_order_oracle_1}
        \Tilde{f}(x) = f(x) + \delta(x),
    \end{equation} 
    which take advantage of the higher smoothness of the function \cite{Bach_2016,Tsybakov_2020, Novitskii_2021}. Such approximation is referred to as "kernel-based" approximation of gradient and has the following form
        \begin{equation}\label{zero_order_gradient}
            \mathbf{\Tilde{g}}(x,\ee) = d \frac{\Tilde{f}(x+\gamma r \ee) - \Tilde{f}(x - \gamma r \ee)}{2 \gamma} K(r) \ee,
        \end{equation}
        where $\gamma>0$ is smoothing parameter, $\ee$ is uniformly distributed in $S_2^d(1)$, $r$ is uniformly distributed in $[-1,1]$, $\ee$ and $r$ are independent, $K:~[-1,1]~\rightarrow~\mathbb{R}$ is a kernel function that satisfies
        \begin{gather*}
            \mathbb{E}[K(u)] = 0, \;\;\; \mathbb{E}[u K(u)] = 1, \;\;\;
            \mathbb{E}[u^j K(u)] = 0, \;\;\; j=2,...,l, \;\;\; \mathbb{E}[|u|^\beta |K(u)|] < \infty.
        \end{gather*}
    Now we can present Algorithm \ref{algo: ZO-MB-SGD_algorithm}. This algorithm is a modification of SGD method. Instead of calculating the first-order gradient oracle from Definition \ref{Definition_1}, we calculate an approximation of the gradient \eqref{zero_order_gradient}. Also, to achieve \newal{an} optimal iteration complexity, we add a batch size $B$.
    \begin{algorithm}[thb]
       \caption{Zero-Order Mini-batch Stochastic Gradient Descent (ZO-MB-SGD)}
       \label{algo: ZO-MB-SGD_algorithm}
    \begin{algorithmic}
       \STATE {\bfseries Input:} step size $\eta$, iteration number $N$, batch size $B$, Kernel $K: [-1, 1] \rightarrow \mathbb{R}$, smoothing parameter $\gamma$, $x_0~\in~\mathbb{R}^d$
       
       \FOR{$k=0$ {\bfseries to} $N-1$}
       \STATE \qquad {1.} ~~~Sample vectors $\ee_1, \ee_2 ..., \ee_B$ uniformly distributed on the unit sphere $S_2^d(1)$ and scalars 
       \STATE \qquad \textcolor{white}{1.} ~~~$r_1, r_2, ..., r_B$ uniformly distributed on the interval [-1, 1]
       \STATE \qquad {2.} ~~~Define $\Tilde{\gg}(x_k,\ee_i) =  d \frac{\Tilde{f}(x_k+\gamma r_i \ee_i) - \Tilde{f}(x_k - \gamma r_i \ee_i)}{2 \gamma} K(r_i) \ee_i$ using \eqref{zero_order_oracle_1}
       \STATE \qquad {3.} ~~~Calculate $\gg_k = \frac{1}{B} \sum_{i=1}^B \Tilde{\gg}(x_k,\ee_i)$ 
       \STATE \qquad {4.} ~~~$x_{k+1} \gets x_{k} - \eta \gg_k$
       \ENDFOR
       \STATE {\bfseries Return:} $x_{N}$
    \end{algorithmic}
    \end{algorithm}

    The next theorem presents the convergence result of ZO-MB-SGD method in terms of the expectation.
    \begin{theorem}\label{Theorem:1}
        Let function $f(x)$ satisfy Assumptions~\ref{Assumption_1}~-~\ref{Assumption_2} and the gradient approximation \eqref{zero_order_gradient} satisfy Assumptions~\ref{Assumption_3}~-~\ref{Assumption_4} then by choosing the step size $\eta~\leq~  \frac{1}{(M+1) L_2}$, there exists parameters
        \begin{equation*}
        \begin{split}
            & M \leq 6 \beta^3 d, \;\;\;\;\; \sigma^2 \leq \frac{3 \beta^3 d^2 L_2^2 \gamma^2}{4} + \frac{2 \beta^3 d^2 \Delta^2}{\gamma^2}, \quad m=0, \;\;\;\;\;\; \zeta^2 \leq \beta^2 d^2 \left( L_\beta^2 \gamma^{2(\beta-1)} + \frac{ \Delta^2}{\gamma^2} \right)
        \end{split}
        \end{equation*} 
         such that Algorithm \ref{algo: ZO-MB-SGD_algorithm} with smoothing parameter $\gamma~\leq~\mathcal{O} \left(  \Delta^{1/\beta} \right)$ achieves the following error floor
        \begin{equation*}\label{Error_Floor_Theorem_1}
           \mathbb{E}f(x_N) - f^*  = \max\left\{ \varepsilon, \mathcal{O} \left(d^{2} \Delta^{\frac{2(\beta-1)}{\beta}}\right) \right\},
        \end{equation*}
         where $f^*$ is the solution to problem \eqref{Init_Problem}, $L_2$ is the Lipschitz gradient constant with respect to 2-norm.
    \end{theorem}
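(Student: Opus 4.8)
The plan is to recognize the kernel-based estimator \eqref{zero_order_gradient} as a concrete instance of the biased gradient oracle of Definition~\ref{Definition_1}, to verify that it satisfies Assumptions~\ref{Assumption_3}--\ref{Assumption_4} with the four parameters $M,\sigma^2,m,\zeta^2$ quoted in the statement, and then to feed these into Lemma~\ref{Itog_theorem_all_in_text}. The convergence bound of the lemma splits into a geometrically decaying transient and a stationary floor $\frac{\zeta^2}{2\mu(1-m)} + \frac{\eta L_2 \sigma^2}{2B\mu(1-m)}$; once the parameters are in hand, the theorem follows by taking $N$ large enough to push the transient below $\varepsilon$ and by tuning the smoothing radius $\gamma$ to minimize the floor. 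So the real content is the derivation of the oracle parameters, and the rest is bookkeeping.

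For the bias $\mathbf{b}(x) = \mathbb{E}_{\ee,r}[\tilde{\mathbf{g}}(x,\ee)] - \nabla f(x)$ I would split $\tilde f = f + \delta$ and treat the two pieces separately. For the clean part I expand $f(x \pm \gamma r \ee)$ by Taylor to order $l$: the even-order terms cancel in the symmetric difference, the first-order term reproduces $\nabla f(x)$ exactly after combining the sphere identity $\mathbb{E}_\ee[\ee\ee^\top] = \frac{1}{d} I$ with the normalization $\mathbb{E}[uK(u)] = 1$, and the intermediate odd-order terms are annihilated by the moment conditions $\mathbb{E}[u^j K(u)] = 0$ for $2 \le j \le l$. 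What survives is the Taylor remainder, which the H\"older bound of Assumption~\ref{Assumption_1} controls by $d L_\beta \gamma^{\beta-1}\,\mathbb{E}[|r|^\beta |K(r)|]$. For the adversarial part, $|\delta(x+\gamma r\ee) - \delta(x-\gamma r\ee)| \le 2\Delta$ immediately gives a contribution of order $d\Delta\gamma^{-1}\mathbb{E}[|K(r)|]$. Squaring and adding via $\|a+b\|^2 \le 2\|a\|^2 + 2\|b\|^2$ yields a bound of the form $\beta^2 d^2(L_\beta^2\gamma^{2(\beta-1)} + \Delta^2/\gamma^2)$ that is independent of $\nabla f(x)$; hence $m = 0$ and $\zeta^2$ takes the stated value.

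For the noise I would bound the full second moment $\mathbb{E}\|\tilde{\mathbf{g}}(x,\ee)\|_2^2$, which dominates the variance $\mathbb{E}_\xi\|\mathbf{n}(x,\xi)\|_2^2$. Splitting $\tilde f = f + \delta$ and using $(a+b)^2 \le 2a^2 + 2b^2$, the $\delta$-piece is bounded by $\frac{2d^2\Delta^2}{\gamma^2}\mathbb{E}[K^2(r)]$, giving the $\Delta^2/\gamma^2$ contribution to $\sigma^2$. For the clean piece I expand $f(x+\gamma r\ee) - f(x-\gamma r\ee) = 2\gamma r\langle\nabla f(x),\ee\rangle + \mathcal{O}(L_2\gamma^2 r^2)$; the leading term produces $\langle\nabla f(x),\ee\rangle^2$, whose sphere average is $\frac{1}{d}\|\nabla f(x)\|_2^2$, and after multiplying by $d^2 K^2(r)/(4\gamma^2)$ and integrating in $r$ this gives the term $M\|\nabla f(x)\|_2^2$ with $M = \mathcal{O}(\beta^3 d)$, while the quadratic remainder yields the $\beta^3 d^2 L_2^2\gamma^2$ term. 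The recurring $\beta^3$ factors come from the growth of the weighted kernel norms $\mathbb{E}[r^2 K^2(r)]$ and $\mathbb{E}[K^2(r)]$ for the Legendre-type kernel, and matching them against the announced constants ($6\beta^3 d$, $\tfrac34\beta^3$, $2\beta^3$) is exactly where the careful constant tracking lives.

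Finally, with $m = 0$ and $\eta L_2 \le (M+1)^{-1}$, the floor is governed by $\zeta^2/(2\mu)$, whose two terms $L_\beta^2\gamma^{2(\beta-1)}$ and $\Delta^2/\gamma^2$ balance at $\gamma \asymp \Delta^{1/\beta}$, the choice announced in the statement; substituting gives $\zeta^2 = \mathcal{O}(\beta^2 d^2 \Delta^{2(\beta-1)/\beta})$ and hence the floor $\mathcal{O}(d^2\Delta^{2(\beta-1)/\beta})$, while the variance term carries one fewer power of $d$ together with the batch factor $1/B$ and, for this $\gamma$, does not dominate. The $\max\{\varepsilon,\cdot\}$ in the conclusion simply records that one runs enough iterations to reach accuracy $\varepsilon$ whenever $\varepsilon$ exceeds this unavoidable floor. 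I expect the main obstacle to be precisely the parameter derivation of the two middle paragraphs --- propagating the H\"older remainder and the kernel moment and variance constants with the correct powers of $d$ and $\beta$ --- rather than the invocation of Lemma~\ref{Itog_theorem_all_in_text} or the final optimization over $\gamma$, both of which are routine.
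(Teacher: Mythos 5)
Your proposal is correct and follows essentially the same route as the paper's proof (Appendix~\ref{Appendix:Proof_Theorem_1}): the identical split $\Tilde{f}=f+\delta$, the symmetric-difference Taylor expansion with the kernel moment conditions annihilating everything except $\nabla f(x)$ and the H\"older remainder $\kappa_\beta d(L_\beta\gamma^{\beta-1}+\Delta/\gamma)$, the sphere identity $\mathbb{E}_\ee\langle s,\ee\rangle^2\leq \|s\|_2^2/d$ for the second moment, and the Legendre-kernel bounds $\kappa_\beta\leq 2\sqrt{2}(\beta-1)$, $\kappa\leq 3\beta^3$ supplying exactly the $\beta^2$ and $\beta^3$ factors you anticipated. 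The final step --- substituting $m=0$, $\zeta^2$, $M$, $\sigma^2$ into Lemma~\ref{Itog_theorem_all_in_text} with $\eta=\mathcal{O}(1/M)$, taking $B$ large so the variance terms do not dominate, and balancing the two bias terms at $\gamma=\Delta^{1/\beta}$ --- is also precisely how the paper concludes.
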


    The convergence result of Theorem~\ref{Theorem:1} shows that Algorithm~\ref{algo: ZO-MB-SGD_algorithm} with gradient approximation \eqref{zero_order_gradient} achieves the error floor $\mathcal{O} \left(d^{2} \Delta^{\frac{2(\beta-1)}{\beta}}\right)$ with linear convergence rate. This asymptote arises due to the accumulation of adversarial noise in the bias $\bb(x)$. To achieve $\varepsilon$-accuracy of Problem \eqref{Init_Problem}, the maximum level of noise can be defined as $\Delta \leq \mathcal{O}\left( \varepsilon^{\frac{\beta}{2(\beta-1)}} d^{\frac{-\beta}{\beta-1}} \right)$. This result significantly improves the approach described in \cite{Stich_2020}. Namely, using the same concept of the zero-order oracle~\eqref{zero_order_oracle_1} Algorithm~\ref{algo: ZO-MB-SGD_algorithm} via Gaussian approximation achieves the error floor $ \mathcal{O} \left( d^{2} \Delta \right) $, respectively, with the following maximum level of noise $\Delta \leq \mathcal{O} \left( \varepsilon d^{-2} \right)$. For a detailed proof of Theorem~\ref{Theorem:1}, see Appendix~\ref{Appendix:Proof_Theorem_1}. Also see Appendix~\ref{Gaussian_Proof_Theorem1} for details on estimates for Gaussian smoothing approximation. 
    
    Note that the results of Theorem~\ref{Theorem:1} were obtained by using the deterministic setting of the zero-order oracle~\eqref{zero_order_oracle_1}. However, if we reformulate problem \eqref{Init_Problem} to a stochastic setting, specifying that $f(x) := \mathbb{E}_\xi f(x, \xi)$ and we use a gradient approximation \eqref{zero_order_gradient} with two-point feedback, replacing in the zero-order oracle \eqref{zero_order_oracle_1} the calculation of objective function value $f(x)$ with the calculation of objective function value on realizations $f(x,\xi)$, then it is not difficult to show that the convergence results of Theorem~\ref{Theorem:1} also hold for Algorithm~\ref{algo: ZO-MB-SGD_algorithm}, which uses the gradient approximation with two-point feedback, where the term two-point feedback implies that it is possible to get the value of the objective function at two points on one realization. See Appendix~\ref{Appendix:Proof_two_point} and \ref{Gaussian_Proof_Theorem2} for a detailed proof of the case where Algorithm~\ref{algo: ZO-MB-SGD_algorithm} uses a gradient approximation with two-point feedback, applying the "kernel-based" approximation and Gaussian approximation approaches, respectively. \newal{The} concepts of a stochastic zero-order oracle when two-point feedback is not available are explored in the Section~\ref{Section:Extended_analysis}.
    
    
    \begin{remark}
        It should be noted that Algorithm \ref{algo: ZO-MB-SGD_algorithm} has a linear convergence rate (see Theorem \ref{Theorem:1}). Then, since \newal{the} ZO-MB-SGD method uses a randomized scheme (randomization on the $L_2$ sphere), we~can obtain exact estimates of the large deviation probabilities using Markov's inequality \cite{Anikin_2015}:
        \begin{equation*}
            \mathcal{P}\left( f(x_{N(\varepsilon \omega)}) - f^* \geq \varepsilon \right) \leq \omega \frac{\mathbb{E}\left[ f(x_{N(\varepsilon \omega)} \right] - f^*}{\varepsilon \omega} \leq \omega.
        \end{equation*}
    \end{remark}

\section{Extended analysis of convergence via stochastic zero-order oracle}\label{Section:Extended_analysis}

    In this section, we continue our study of gradient-free methods for solving Problem \eqref{Init_Problem} with the $\mu$-PL condition (see Assumption \ref{Assumption_2}). We want to extend the analysis of Algorithm \ref{algo: ZO-MB-SGD_algorithm}. Specifically, for cases where the zero-order oracle has a stochastic setting $\Tilde{f}(x,\xi)$. To do this, we consider the concept~of gradient approximation when only one-point feedback is available \cite[see, e.g. ][]{Gasnikov_2017} with two types of adversarial noise: adversarial stochastic noise, which is quite common in the following works \cite{Tsybakov_2020, Novitskii_2021, Tsybakov_2022}, and adversarial deterministic noise, which is actively used in \cite{Dvinskikh_2022, Lobanov_2022, Gasnikov_ICML}. To introduce stochastic, let us rewrite the initial problem \eqref{Init_Problem}, where zero-order oracle returns an unbiased noisy stochastic function value~$f(x,\xi)$: 
    \begin{equation}
         \label{Stochastic_Problem}
        \min_{x \in Q \subset \mathbb{R}^d} \{f(x) := \mathbb{E}_\xi f(x,\xi)\}.
    \end{equation}

\subsection{Stochastic zero-order oracle with additive adversarial stochastic noise}\label{Subsection:Stochastic_noise}

     If for some reason two-point feedback is not available, you can use Zero-Order Mini-batch Stochastic Gradient Descent method (see Algorithm \ref{algo: ZO-MB-SGD_algorithm}) with one-point feedback. Then using the concept of additive stochastic noise, the "kernel-based" approximation of the gradient \eqref{zero_order_gradient} takes the following~form:
    \begin{equation}\label{zero_order_gradient_add_noise}
        \mathbf{\Tilde{g}}(x,\xi,\ee) = d \frac{f(x+\gamma r \ee) + \xi_1 - f(x - \gamma r \ee) - \xi_2}{2 \gamma} K(r) \ee,
    \end{equation}
    where the zero-order oracle in this case is defined in the following form:
    \begin{equation}
        \label{zero_order_oracle_3}
        \Tilde{f}(x,\xi) = f(x) + \xi,
    \end{equation}
    and $\xi_1 \neq \xi_2 $ are adversarial stochastic noises such that $\mathbb{E}[\xi_1^2] \leq \Tilde{\Delta}^2$ and $\mathbb{E}[\xi_2^2] \leq \Tilde{\Delta}^2$, $\Tilde{\Delta} \geq 0$, and the random variables $\xi_1$ and $\xi_2$ are independent from $\ee$ and $r$. Also, this concept does not require the assumption of zero-mean $\xi_1$ and $\xi_2$. It is enough that $\mathbb{E}[\xi_1 \ee] = 0$ and $\mathbb{E}[\xi_2 \ee] = 0$. The gradient approximation \eqref{zero_order_gradient_add_noise} has some similarities with the two-point gradient approximation (see discussion following Theorem~\ref{Theorem:1}), but it is a one-point gradient approximation because it is impossible to obtain the value of the objective function on one realization twice. 
    %
    The following theorem provides the convergence results of ZO-MB-SGD method (see Algorithm \ref{algo: ZO-MB-SGD_algorithm}) with the gradient approximation~\eqref{zero_order_gradient_add_noise}.
    \begin{theorem}\label{Theorem:add_noise}
        Let function $f(x)$ satisfy Assumptions~\ref{Assumption_1}~-~\ref{Assumption_2} and the gradient approximation \eqref{zero_order_gradient_add_noise} satisfy Assumptions~\ref{Assumption_3}~-~\ref{Assumption_4} then by choosing the step size $\eta~\leq~\frac{1}{(M+1) L_2}$ there exists parameters
        \begin{gather*}
             M \leq 18 \beta^3 d, \;\;\;\;\; \sigma^2 \leq \frac{3  d^2 L_2^2 \gamma^2}{4} +  \frac{2d^2 \Tilde{\Delta}^2}{\gamma^2}, \quad
             m=0, \;\;\;\;\;\; \zeta^2 \leq 8 \beta^2 d^2 L_\beta^2 \gamma^{2(\beta-1)} 
        \end{gather*}
        such that Algorithm \ref{algo: ZO-MB-SGD_algorithm} with approximation of the gradient~\eqref{zero_order_gradient_add_noise} has the following convergence~rate
        \begin{equation*}\label{Itaration_Theorem_2}
           \mathbb{E}f(x_N) - f^*  = \mathcal{O} \left( (1 - \eta \mu (1- m))^N (f(x_0) - f^*) \right), 
        \end{equation*}
         where $f^*$ is the solution to problem \eqref{Stochastic_Problem}.
    \end{theorem}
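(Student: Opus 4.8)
The plan is to reduce the statement to the descent recursion of Lemma~\ref{Itog_theorem_all_in_text} by verifying that the one-point estimator \eqref{zero_order_gradient_add_noise} is a biased gradient oracle in the sense of Definition~\ref{Definition_1} whose bias and noise satisfy Assumptions~\ref{Assumption_3}--\ref{Assumption_4} with exactly the constants $M$, $\sigma^2$, $m$, $\zeta^2$ listed in the statement. Writing $\bb(x) = \mathbb{E}_{\ee,r,\xi}[\mathbf{\Tilde{g}}(x,\xi,\ee)] - \nabla f(x)$ and $\nn(x,\xi,\ee) = \mathbf{\Tilde{g}}(x,\xi,\ee) - \mathbb{E}[\mathbf{\Tilde{g}}(x,\xi,\ee)]$, the argument runs parallel to the proof of Theorem~\ref{Theorem:1}; the only structural change is where the injected noise $\xi_1,\xi_2$ lands. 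First I would split the estimator as
\begin{equation*}
\mathbf{\Tilde{g}}(x,\xi,\ee) = d\frac{f(x+\gamma r\ee) - f(x-\gamma r\ee)}{2\gamma}K(r)\ee + d\frac{\xi_1 - \xi_2}{2\gamma}K(r)\ee,
\end{equation*}
so that the first summand is precisely the noiseless kernel estimator analysed for Theorem~\ref{Theorem:1} and the second is the contribution of the additive stochastic noise. Because $\mathbb{E}[\xi_1\ee]=\mathbb{E}[\xi_2\ee]=0$, the second summand is mean-zero and hence contributes nothing to $\bb(x)$; this is the decisive difference from the deterministic case, and it explains why $\zeta^2$ here carries only the H\"{o}lder term $8\beta^2 d^2 L_\beta^2\gamma^{2(\beta-1)}$ and no $\Delta^2/\gamma^2$ contribution. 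The bias bound itself then follows, as in Theorem~\ref{Theorem:1}, from a Taylor expansion of $f$ to order $l$, the H\"{o}lder remainder estimate of Assumption~\ref{Assumption_1}, the kernel moment conditions ($\mathbb{E}[uK(u)]=1$ and $\mathbb{E}[u^jK(u)]=0$ for $2\le j\le l$), and the moments of the uniform law on $S_2^d(1)$, which also forces $m=0$.

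Next I would bound the second moment $\mathbb{E}\|\nn(x,\xi,\ee)\|_2^2$ to fit the form of Assumption~\ref{Assumption_3}. The noiseless part supplies the multiplicative factor $M\le 18\beta^3 d$ together with the additive piece $\tfrac{3}{4}d^2L_2^2\gamma^2$ arising from the $L_2$-smooth second-order Taylor term, exactly as in the kernel-variance computation behind Theorem~\ref{Theorem:1}. The extra summand contributes
\begin{equation*}
\frac{d^2}{4\gamma^2}\,\mathbb{E}\big[(\xi_1-\xi_2)^2\big]\,\mathbb{E}[K(r)^2]\,\mathbb{E}\|\ee\|_2^2 = \mathcal{O}\!\left(\frac{d^2\Tilde{\Delta}^2}{\gamma^2}\right),
\end{equation*}
using $\|\ee\|_2=1$, the elementary bound $(\xi_1-\xi_2)^2\le 2\xi_1^2+2\xi_2^2$, and $\mathbb{E}[\xi_i^2]\le\Tilde{\Delta}^2$; this is precisely the second summand $\frac{2d^2\Tilde{\Delta}^2}{\gamma^2}$ of $\sigma^2$. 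This completes the verification of all four parameters.

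Finally I would feed these constants into Lemma~\ref{Itog_theorem_all_in_text} (with $m=0$), obtaining
\begin{equation*}
\mathbb{E}f(x_N)-f^* \le (1-\eta\mu)^N(f(x_0)-f^*) + \frac{\zeta^2}{2\mu} + \frac{\eta L_2\sigma^2}{2B\mu}.
\end{equation*}
The key observation is that, in contrast with the persistent error floor of Theorem~\ref{Theorem:1}, both residual terms can now be driven below any prescribed accuracy: choosing $\gamma$ small makes $\zeta^2$ and the $\gamma^2$ part of $\sigma^2$ negligible, while the resulting blow-up $\Tilde{\Delta}^2/\gamma^2$ sits inside $\sigma^2/B$ and is absorbed by enlarging the batch size $B$. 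Hence the two constant terms are dominated by the geometric term and we conclude $\mathbb{E}f(x_N)-f^* = \mathcal{O}\big((1-\eta\mu(1-m))^N(f(x_0)-f^*)\big)$.

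I expect the main obstacle to be the moment bookkeeping for the noiseless kernel estimator, namely establishing $M\le 18\beta^3 d$ and the additive variance $\tfrac{3}{4}d^2L_2^2\gamma^2$ with the correct $\beta$- and $d$-dependence, since this requires combining the H\"{o}lder Taylor remainder with the kernel moments and the non-trivial moments of the uniform distribution on the high-dimensional sphere. By comparison, the additive-noise terms are routine once one exploits $\mathbb{E}[\xi_1\ee]=\mathbb{E}[\xi_2\ee]=0$, and the final step is a direct substitution into the already-proved descent lemma.
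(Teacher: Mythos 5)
Your proposal is correct and follows essentially the same route as the paper's proof in Appendix~\ref{Appendix:Proof_add_noise}: the same Taylor/kernel-moment bias bound with the noise annihilated by $\mathbb{E}[\xi_1\ee]=\mathbb{E}[\xi_2\ee]=0$ (so $m=0$ and $\zeta^2\le\kappa_\beta^2 d^2 L_\beta^2\gamma^{2(\beta-1)}\le 8\beta^2 d^2 L_\beta^2\gamma^{2(\beta-1)}$), the same second-moment bound via the $\pm f(x)\pm 2\dotprod{\nabla f(x)}{\gamma r\ee}$ insertion together with $(\xi_1-\xi_2)^2\le 2\xi_1^2+2\xi_2^2$ (giving $M=\mathcal{O}(\beta^3 d)$ and the two-term $\sigma^2$), and the identical conclusion obtained by substituting these constants into Lemma~\ref{Itog_theorem_all_in_text} and observing that taking $\gamma$ small and the batch size $B$ large drives both residual terms below any accuracy, leaving only the geometric term. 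One minor bookkeeping point: your product formula for the noise contribution to the second moment silently drops the cross term between the function difference and $\xi_1-\xi_2$, which need not vanish since the $\xi_i$ are not assumed zero-mean (only $\mathbb{E}[\xi_i\ee]=0$ is required); the paper sidesteps this by applying \eqref{Squared_norm_of_the_sum} to the full scalar sum before taking expectations, which is exactly the factor-of-two bound you already invoke, so the slip is cosmetic and absorbed by your $\mathcal{O}(\cdot)$.
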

    
    The results of Theorem \ref{Theorem:add_noise} show that Algorithm \ref{algo: ZO-MB-SGD_algorithm} with gradient approximation \eqref{zero_order_gradient_add_noise} has a linear convergence rate. Also, in contrast to previous Theorem \ref{Theorem:1}, it does not have a pronounced asymptote. This effect is observed because the concept of zero-order oracle \eqref{zero_order_oracle_3} does not imply an accumulation~of adversarial stochastic noise in the bias, and also reduces the variance by the large batch~size~$B$. It is worth noting that the same convergence results of ZO-MB-SGD method (see Algorithm \ref{algo: ZO-MB-SGD_algorithm}) are obtained using the approach of Gaussian approximation with a zero-order oracle concept \eqref{zero_order_oracle_3}. The~proof~of Theorem \ref{Theorem:add_noise} for the kernel approximation approach can be found in more detail in~Appendix~\ref{Appendix:Proof_add_noise}. Also see Appendix~\ref{Gaussian_Proof_Theorem3} for a detailed proof of the convergence rate of Algorithm \ref{algo: ZO-MB-SGD_algorithm}, using a~Gaussian gradient approximation with one-point feedback via the zero-order oracle concept~\eqref{zero_order_oracle_3}.
    

\subsection{Stochastic zero-order oracle with mixed adversarial noise}\label{Subsection:One_point_feedback}

    Another concept with one-point feedback is combining the gradient approximation \eqref{zero_order_gradient_add_noise} (from Subsection \ref{Subsection:Stochastic_noise}, adversarial stochastic noise) with adversarial deterministic noise (e.g., from zero-order oracle~\eqref{zero_order_oracle_1}). Then the gradient approximation \eqref{zero_order_gradient} with one-point feedback takes the following~form:
    \begin{equation}
        \label{zero_order_gradient_one_point}
        \mathbf{\Tilde{g}}(x,\xi,\ee) = d \frac{\Tilde{f}(x+\gamma r \ee, \xi_1) - \Tilde{f}(x - \gamma r \ee, \xi_2)}{2 \gamma} K(r) \ee,
    \end{equation}
    where the zero-order oracle
    \begin{equation}
        \label{zero_order_oracle_4}
        \Tilde{f}(x, \xi_1) = f(x) + \xi_1 + \delta(x),
    \end{equation}
    and $\xi_1 \neq \xi_2 $ are adversarial stochastic noises such that $\mathbb{E}[\xi_1^2] \leq \Tilde{\Delta}^2$ and $\mathbb{E}[\xi_2^2] \leq \Tilde{\Delta}^2$, $\Tilde{\Delta} \geq 0$, and $\delta(x)$~is~adversarial deterministic noise, $|\delta(x)| \leq \Delta$, $\Delta \geq 0$ is a level of noise. The random variables $\xi_1$ and $\xi_2$ are independent from $\ee$ and $r$. Also, this concept does not require the assumption of zero-mean $\xi_1$ and $\xi_2$. It is enough that $\mathbb{E}[\xi_1 \ee] = 0$ and $\mathbb{E}[\xi_2 \ee] = 0$. It is worth noting that the approximation of~gradient \eqref{zero_order_gradient_one_point} is also a gradient approximation with one-point feedback as in Subsection \ref{Subsection:Stochastic_noise}. This approximation implies calculating the value of objective function on different realizations $\xi_1 \neq \xi_2$. In the case when $\xi_1 = \xi_2$ we can say that the gradient approximations considered in this subsection and in the discussion of Theorem \ref{Theorem:1} are identical.  
    %
    The following theorem presents the convergence results of ZO-MB-SGD method with the gradient approximation \eqref{zero_order_gradient_one_point} via a zero-order oracle \eqref{zero_order_oracle_4}.
    \begin{theorem}\label{Theorem:combine}
        Let function $f(x)$ satisfy Assumptions~\ref{Assumption_1}~-~\ref{Assumption_2} and the gradient approximation \eqref{zero_order_gradient_one_point} satisfy Assumptions~\ref{Assumption_3}~-~\ref{Assumption_4} then by choosing the step size $\eta~\leq~\frac{1}{(M+1) L_2}$ there exists parameters
        \begin{align*}
             M  \leq \beta^3 d, \quad \quad   \sigma^2 \leq \beta^3  d^2  L_2^2 \gamma^2 + \frac{\beta^3 d^2 (\Delta^2 + \Tilde{\Delta}^2)}{\gamma^2} , \quad \quad
             m=0, \quad \quad  \zeta^2 \leq \beta^2 d^2 L_\beta^2 \gamma^{2(\beta-1)} + \frac{\beta^2 d^2 \Delta^2}{\gamma^2}   
        \end{align*}
        such that Algorithm~\ref{algo: ZO-MB-SGD_algorithm} with gradient approximation~\eqref{zero_order_gradient_one_point} and $\gamma~\leq~\mathcal{O} \left(  \Delta^{1/\beta} \right)$ achieves the following error floor
        \begin{equation*}
           \mathbb{E}f(x_N) - f^*  = \max\left\{ \varepsilon, \mathcal{O} \left(d^{2} \Delta^{\frac{2(\beta-1)}{\beta}}\right)\right\},
        \end{equation*}
         where $f^*$ is the solution to problem \eqref{Stochastic_Problem}.
    \end{theorem}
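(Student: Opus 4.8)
The plan is to follow the same two-stage strategy used for Theorem~\ref{Theorem:1}: first recast the one-point estimator \eqref{zero_order_gradient_one_point} as a biased gradient oracle in the sense of Definition~\ref{Definition_1}, verifying that it satisfies Assumptions~\ref{Assumption_3}--\ref{Assumption_4} with the four parameters $M$, $\sigma^2$, $m$, $\zeta^2$ claimed in the statement; and then feed those parameters into the generic convergence bound of Lemma~\ref{Itog_theorem_all_in_text} and optimize over the smoothing parameter $\gamma$. The only genuinely new ingredient relative to Theorem~\ref{Theorem:1} is the additive stochastic noise $\xi_1,\xi_2$, so most of the work is to isolate its effect.

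First I would expand $\tilde f(x\pm\gamma r\ee,\xi)=f(x\pm\gamma r\ee)+\xi_{1,2}+\delta(x\pm\gamma r\ee)$ and split the estimator \eqref{zero_order_gradient_one_point} linearly into three pieces: a clean kernel term built from $f$ alone, a deterministic-noise term built from $\delta$, and a stochastic-noise term proportional to $\xi_1-\xi_2$. Defining $\bb(x):=\mathbb{E}_{\ee,r,\xi}[\tilde\gg(x,\xi,\ee)]-\nabla f(x)$ and $\nn(x,\xi):=\tilde\gg(x,\xi,\ee)-\nabla f(x)-\bb(x)$ gives exactly the decomposition \eqref{eq:Definition_1} with zero-mean noise. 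The key computation for the bias is the expectation of the clean term: using that $\ee$ is uniform on $S_2^d(1)$, that $r$ is uniform on $[-1,1]$, and the kernel moment conditions $\mathbb{E}[K(u)]=0$, $\mathbb{E}[uK(u)]=1$, $\mathbb{E}[u^jK(u)]=0$ for $2\le j\le l$, one expands $f$ to order $l$ about $x$; all monomials of degree $\le l$ either integrate to zero against the kernel or reproduce $\nabla f(x)$, and the H\"older remainder of Assumption~\ref{Assumption_1} bounds the leftover by a term of order $d\,L_\beta\gamma^{\beta-1}$. Because $\mathbb{E}[\xi_i\ee]=0$, the stochastic term is mean-zero and contributes nothing to $\bb(x)$, while the deterministic-noise term contributes a term of order $d\,\Delta/\gamma$. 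Squaring and using $\|\ee\|_2=1$ then yields $\zeta^2\lesssim \beta^2 d^2\big(L_\beta^2\gamma^{2(\beta-1)}+\Delta^2/\gamma^2\big)$ and $m=0$.

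Next I would bound the second moment $\mathbb{E}\|\tilde\gg\|_2^2$ to extract $M$ and $\sigma^2$. Here the factor $d^2$ comes from the sphere randomization, the clean part produces the curvature/variance term $\mathcal{O}(\beta^3 d^2 L_2^2\gamma^2)$ together with a strong-growth term $M\|\nabla f(x)\|_2^2$ with $M\le\beta^3 d$, and the two noise sources each produce a term of order $d^2/\gamma^2$ using $\mathbb{E}[\xi_i^2]\le\tilde\Delta^2$ and $|\delta|\le\Delta$, giving $\sigma^2\lesssim\beta^3 d^2 L_2^2\gamma^2+\beta^3 d^2(\Delta^2+\tilde\Delta^2)/\gamma^2$. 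I expect the main obstacle to be precisely these moment computations on the sphere, in particular controlling $\mathbb{E}\|\ee\ee^\top\nabla f(x)\|$-type quantities that generate the dimension dependence and the explicit constants in $M$ and $\sigma^2$, exactly as in the kernel-approximation references cited in the paper.

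Finally, with $m=0$ the bound of Lemma~\ref{Itog_theorem_all_in_text} reads $\mathbb{E}f(x_N)-f^*\le(1-\eta\mu)^N(f(x_0)-f^*)+\frac{\zeta^2}{2\mu}+\frac{\eta L_2\sigma^2}{2B\mu}$. The batch size $B$ may be taken large enough to push the $\sigma^2/B$ term below the target accuracy, so the persistent error floor is governed by $\zeta^2/(2\mu)\lesssim \frac{d^2}{\mu}\big(L_\beta^2\gamma^{2(\beta-1)}+\Delta^2/\gamma^2\big)$. Balancing the two $\gamma$-dependent terms gives $\gamma^{2\beta}\sim\Delta^2$, i.e.\ $\gamma\sim\Delta^{1/\beta}$ as in the hypothesis, at which point both terms are of order $\Delta^{2(\beta-1)/\beta}$ and the floor becomes $\mathcal{O}\big(d^2\Delta^{2(\beta-1)/\beta}\big)$; taking the maximum with the accuracy $\varepsilon$ yields the claimed expression. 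This last step is essentially identical to the optimization in Theorem~\ref{Theorem:1}; the only difference is that the stochastic noise now lives in $\sigma^2$ (and is eliminated by batching) rather than in $\zeta^2$, which is why the error floor retains the same $d^2\Delta^{2(\beta-1)/\beta}$ form.
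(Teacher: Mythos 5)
Your proposal is correct and follows essentially the same route as the paper's own proof in Appendix~\ref{Appendix:Proof_combine}: you verify Assumptions~\ref{Assumption_3}--\ref{Assumption_4} for the one-point estimator \eqref{zero_order_gradient_one_point} via the kernel moment conditions and the H\"older remainder (with the stochastic noise dropping out of the bias because $\mathbb{E}[\xi_i\ee]=0$ while $\delta$ contributes the $d\Delta/\gamma$ term, and both noise sources entering only $\sigma^2$), then substitute $M,\sigma^2,m,\zeta^2$ into Lemma~\ref{Itog_theorem_all_in_text}, suppress the variance by large batch size, and balance $\gamma=\Delta^{1/\beta}$ to obtain the floor $\mathcal{O}\left(d^{2}\Delta^{\frac{2(\beta-1)}{\beta}}\right)$. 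The only cosmetic difference is that the paper tracks explicit constants through $\kappa_\beta \leq 2\sqrt{2}(\beta-1)$ and $\kappa \leq 3\beta^3$ where you use order notation.
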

    
    The results of Theorem \ref{Theorem:combine} show that the Algorithm \ref{algo: ZO-MB-SGD_algorithm} with gradient approximation \eqref{zero_order_gradient_one_point}, which is corrupted by adversarial deterministic and stochastic noises, converges to the following asymptote $ \mathcal{O} \left(d^{2} \Delta^{\frac{2(\beta-1)}{\beta}}\right)$ with linear rate. This result can be restated: the maximum permissible level of adversarial deterministic noise to achieve $\varepsilon$-accuracy is $\Delta \leq \mathcal{O}\left( \varepsilon^{\frac{\beta}{2(\beta-1)}} d^{\frac{-\beta}{\beta-1}} \right) $. As a result, we indicate exactly adversarial deterministic noise, because the adversarial stochastic noise does not accumulate in the bias as well as in the variance (if the batch size is large enough). In contrast to stochastic noise, it is the adversarial deterministic noise that defines the level of asymptote (since it accumulates in the bias). Note that in this zero-order oracle concept, too, gradient approximation with "kernel-based" approach achieves a better error floor than Gaussian approach $ \mathcal{O} \left( d^{2} \Delta \right)$. See Appendix~\ref{Appendix:Proof_combine} and \ref{Gaussian_Proof_Theorem4} for a proof of convergence results through kernel and Gaussian approximation~approaches,~respectively.
    


\section{Improved estimates of Table \ref{tab:table_compare}}
In this section, we show how the estimates obtained in this paper on the oracle $T$ complexity, as well as on the maximum noise level $\Delta$, change using an improved analysis (presented in \cite{Akhavan_2023}) of the bias and second moment estimates of the Kernel approximation of the gradient, see e.g., \eqref{zero_order_gradient}. 

Chronologically speaking, this paper was submitted to arXiv in May 2023 and is the first paper in which a gradient-free algorithm (via Kernel approximation) is proposed for solving including non-convex optimization problems satisfying the Polyak--Lojasiewicz condition. However, we believe that it is not correct not to mention the work \cite{Akhavan_2023}, which appeared on arXiv in June 2023, in which the authors proposed an improved analysis for the Kernel approximation and showed an improved estimate on the total number of oracle calls. Despite the superiority in terms of oracle complexity, the results of this paper are independently interesting and partially state of the art, at least in terms of iteration complexity (since we achieve optimal estimates in the case $\beta = 2$, as well as the best we know in the case $\beta > 2$), as well as in terms of maximum noise (since we explicitly derive $\Delta>0$, at which we can still guarantee <<good>> convergence).   

But applying the improved estimates on the bias
\begin{equation*}
    \left\| \mathbb{E} \left[\mathbf{\Tilde{g}}(x,\xi,e) \right] - \nabla f(x) \right\|_2 \leq \kappa_\beta \frac{L}{(l-1)!} \cdot \frac{d}{d + \beta - 1} \tau^{\beta - 1},
\end{equation*}
and second moment
\begin{equation*}
    \mathbb{E}\left[ \| \mathbf{\Tilde{g}}(x,\xi,e) 
 \|_2^2 \right] \leq 4d \mathbb{E}\left[ \| \nabla f(x) \|_2^2 \right] + 4 d \kappa L^2 \tau^2 + \frac{\kappa d^2 \Tilde{\Delta}^2}{\tau^2}.
\end{equation*}
from \cite{Akhavan_2023} (here the authors consider the case of a zero-order oracle \eqref{zero_order_oracle_3} with stochastic noise $\mathbb{E}\left[\xi_1^2\right] \leq \Tilde{\Delta}^2$, which is generalized to deterministic noise $|\delta(x)| \leq \Delta$ (see, for example, Appendix \ref{Appendix:Proof_Theorem_1})) to the analysis of the gradient-free algorithm proposed in this paper we obtain the following results (see Table \ref{tab:table_compare_new}, changes are highlighted). It is not difficult to see that the changes are mainly in the dimensionality of the problem $d$, namely the problem dimensionality in the obtained estimates does not depend on the order of smoothness $\beta$. Thus, at this point it is safe to say that Table \ref{tab:table_compare_new} represents the state-of-the-art results. 

\begin{table}
\begin{minipage}{\textwidth}
\caption{Comparison of iteration complexity \#$N$ oracle complexity, \#$T$ and maximum noise~level~\#$\Delta$ of the algorithm with the following approaches: Kernel and Gaussian approximations for~zero-order~oracle~cases \circledOne~:~$\Tilde{f}(x) = f(x) + \delta(x)$, \circledTwo~:~$\Tilde{f}(x,\xi) = f(x, \xi) + \delta(x)$, \circledThree~:~$\Tilde{f}(x, \xi) = f(x) + \xi$, \circledFour~:~$\Tilde{f}(x,\xi) = f(x)+\xi+\delta(x)$. \small In both approaches, the dependence of the estimates on the batch size $B$ is presented. Notation: $\beta$ = order of smoothness, $\varepsilon$~=~accuracy of the solution to the problem, $d$ = dimension, $\mu$ = Polyak--Lojasiewicz~condition~constant. \textbf{Updated Table \ref{tab:table_compare} with new analysis from paper \cite{Akhavan_2023}}}
\label{tab:table_compare_new} 
\centering
\resizebox{\linewidth}{!}{
\begin{tabular}{ccccccccc}\toprule
\multirow{2}{*}{ Case  } & \multirow{2}{*}{ Batch} & \multicolumn{3}{c}{Gaussian approximation} & \phantom{-} & \multicolumn{3}{c}{Kernel approximation}  \\  \cmidrule{3-5} \cmidrule{7-9}
 & Size & \# $N$ & \# $T$  & \#  $\Delta$  &\phantom{-} & \# $N$ & \# $T$ & \# $\Delta$ \\ \midrule
\multirow{2}{*}{\circledOne, \circledTwo, \circledFour} & $B \in [1, \beta^3 d]$ & $\Tilde{\mathcal{O}} \left( \frac{d}{B}  \mu^{-1}\right)$ & $\Tilde{\mathcal{O}} \left( d  \mu^{-1}\right)$  & $\Delta \leq \mu \varepsilon \al{d^{-1}}$ &   & $\Tilde{\mathcal{O}} \left( \frac{d}{B}  \mu^{-1}\right)$ & $\Tilde{\mathcal{O}} \left( d  \mu^{-1}\right)$& $\Delta \leq \mu \varepsilon \al{d^{-1}}$  \\ \cmidrule{2-9}
 & $B > \beta^3 d$ & $\Tilde{\mathcal{O}} \left(\mu^{-1} \right)$  & $\max \left\{\Tilde{\mathcal{O}} \left(\mu^{-1} B \right), \Tilde{\mathcal{O}} \left( \frac{\al{d^{2}} \Delta^2}{ \varepsilon^{2} \mu^{3}}\right) \right\}$  &  $\Delta \leq  \mu \varepsilon \al{d^{-1}}$ &   & $\Tilde{\mathcal{O}} \left(\mu^{-1} \right)$ &  $\max \left\{\Tilde{\mathcal{O}} \left(\mu^{-1} B \right), \Tilde{\mathcal{O}} \left( \frac{\al{d^2} \Delta^2}{ \varepsilon^{\frac{\beta}{\beta-1}} \mu^{\frac{2\beta -1}{\beta-1}}}\right) \right\}$& $\Delta \leq \frac{\left( \mu \varepsilon \right)^{\frac{\beta}{2(\beta - 1)}}}{\al{d}}$  \\ \cmidrule{1-9}
 \multirow{2}{*}{\circledThree}& $B \in [1, \beta^3 d]$ & $\Tilde{\mathcal{O}} \left( \frac{d}{B}  \mu^{-1}\right)$ & $\Tilde{\mathcal{O}} \left( d  \mu^{-1}\right)$  & $\Delta \leq \mu \varepsilon \al{d^{-1/2}}$ &   & $\Tilde{\mathcal{O}} \left( \frac{d}{B}  \mu^{-1}\right)$ & $\Tilde{\mathcal{O}} \left( d  \mu^{-1}\right)$& $\Delta \leq \mu \varepsilon \al{d^{-1/2}}$  \\ \cmidrule{2-9}
 &   $B > \beta^3 d$ & $\Tilde{\mathcal{O}} \left(\mu^{-1} \right)$  & $\max \left\{\Tilde{\mathcal{O}} \left(\mu^{-1} B \right), \Tilde{\mathcal{O}} \left( \frac{\al{d^{2}} \Delta^2}{ \varepsilon^{2} \mu^{3}}\right) \right\}$  &  $\Delta \leq  \mu \varepsilon \al{d^{-1}} B^{1/2}$ &   & $\Tilde{\mathcal{O}} \left(\mu^{-1} \right)$ &  $\max \left\{\Tilde{\mathcal{O}} \left(\mu^{-1} B \right), \Tilde{\mathcal{O}} \left( \frac{d^{2} \Delta^2}{ \varepsilon^{\frac{\beta}{\beta-1}} \mu^{\frac{2\beta -1}{\beta-1}}}\right) \right\}$&  $\Delta \leq \frac{\left( \mu \varepsilon \right)^{\frac{\beta}{2(\beta - 1)}}}{\al{d}} B^{1/2}$   \\ \bottomrule
\end{tabular}
}
\end{minipage}
\end{table}

\section{Discussion}\label{Section:Discussion}
    To create optimal algorithms for three criteria: oracle complexity, iteration complexity, and maximum level of admissible noise still allowing to guarantee certain accuracy usually use accelerated batched algorithms as a base. This way, optimal iterative and oracle complexities are achieved. However, for a~smooth optimization problem with a PL condition \cite{Yue_2022} showed that the non-accelerated algorithms are optimal. That is why we chose the stochastic gradient descent method (the batched version), which is frequently used in almost all fields of machine learning. By doing so, we guaranteed the optimal oracle and iteration complexity. Also using known techniques from \cite{Gasnikov_2022}, we obtained an estimate for the maximum allowable adversarial noise level, which is the best among the estimates we know. Thus, the proposed Algorithm \ref{algo: ZO-MB-SGD_algorithm} can highly likely be considered optimal by three criteria at~once: the~iteration number~\cite{Yue_2022}, the oracle complexity~\cite{duchi2015optimal}, and the maximum adversarial noise~level~\cite{Risteski_2016}.
    
    Our theoretical results show that in the presence of adversarial noise, Algorithm \ref{algo: ZO-MB-SGD_algorithm} or its analog (e.g., from \cite{Stich_2020}) converges only to an asymptote (error floor). Since adaptive to adversarial noise is an important property of zero-order algorithm \cite[see, e.g.,][]{Bogolubsky_2016}, we considered two models of adversarial noise: deterministic and stochastic. From Theorems \ref{Theorem:add_noise} and \ref{Theorem:combine}\newal{,} we can see that adversarial stochastic noise does not accumulate in the bias. This is an essential difference between the two noise models. 
    
    The theoretical results show the advantage of our approach \newal{over the approach that used} approximation with forward finite difference via Gaussian smoothing from \cite{Stich_2020}. They studied a SGD with biased gradient in an optimization problem under PL condition. But there exists for the smooth case a randomized approximation \cite{Gasnikov_2022}, e.g., via $L_2$-randomization, which was not considered in \cite{Stich_2020}. A natural question arises: Is Algorithm \ref{algo: ZO-MB-SGD_algorithm} superior to the gradient-free counterpart of Algorithm with approximation of the gradient via $L_2$-randomization in optimization problems under the PL condition? After all, the only difference between these approaches is that our approach (kernel approximation) uses increased smoothness information. We explore this question in the experiments. And it turns out the answer to that question is positive (see more details Section \ref{Section:Experiments}). Thus, the Zero-Order Mini-batch Stochastic Gradient Descent method (see Algorithm \ref{algo: ZO-MB-SGD_algorithm}) is robust for solving the problem under the PL condition.
    

    \section{Experiments}\label{Section:Experiments}
    
    In this section, we focus on verifying whether our theoretical bounds are aligned with the numerical performance of the Zero-Order Mini-batch SGD method. In particular, we compare Algorithm \ref{algo: ZO-MB-SGD_algorithm} with the gradient-free counterpart from \cite{Stich_2020} which uses Gaussian smoothing approximation instead of the~exact gradient. In all tests we understand adversarial noise as a computational error~(mantissa).

    We consider a standard problem satisfying the Polyak--Lojasiewicz condition. Namely, the solution of a system of $p$~nonlinear equations \cite{Kuruzov_2022}. The optimization problem \eqref{Init_Problem} have the following form:
    \begin{equation*}
        \min_{x \in \mathbb{R}^d} f(x) := \| g(x) \|_2^2,
    \end{equation*}
    where $g(x) = 0$ is a system of $p$ nonlinear equations such that $p \leq d$,
    \begin{equation*}
        g(x) = C \sin (x) + D \cos (x) - b,
    \end{equation*}
    $x \in \mathbb{R}^d$, $C,D \in \mathbb{R}^{p \times d}$ and $b \in \mathbb{R}^p$. We use the weighted sums of the Legendre polynomials as the~kernel $K(r)$. For example, we have the following values for $\beta = \{1,2,3,4,5,6 \}$ \cite{Bach_2016}:
    \begin{align*}
        &K_\beta(r)  = 3r & & \beta = 1,2;\\
        &K_\beta(r)  =  \frac{15 r}{4} (5 - 7r^3) & & \beta = 3,4;\\
        &K_\beta(r)  =  \frac{195 r}{64} (99r^4 - 126r^2 + 35) & & \beta = 5,6.
    \end{align*}
    
    In Figure \ref{fig:Compare_Batch_size}\newal{,} we compare Algorithm \ref{algo: ZO-MB-SGD_algorithm} ("Kernel" approximation) with the gradient-free counterpart of \newal{the} algorithm from \cite{Stich_2020} ("Gaussian" approximation). We observe that Algorithm \ref{algo: ZO-MB-SGD_algorithm} significantly surpasses its counterpart in terms of convergence rate and error floor. We also see that when we increase batch size (e.g., from $B=1$ to $B = 10$) convergence neighborhood of the ZO-MB-SGD method decreases.

    \begin{figure}[!h]
        \centering
        \begin{minipage}{.46\textwidth}
          \centering
          \includegraphics[width=.9\linewidth]{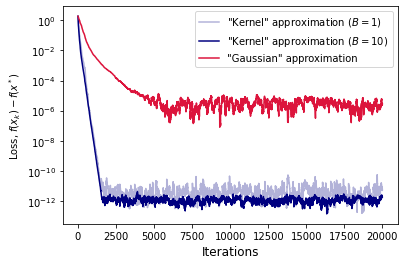}
          \captionof{figure}{\small Comparing Algorithm \ref{algo: ZO-MB-SGD_algorithm} with gradient-free Algorithm from \cite{Stich_2020} and effect of the parameter~$B$ (batch size) on the convergence neighborhood of Zero-Order Mini-batch Stochastic Gradient Descent. Here we optimize $f(x)$ with the parameters: $d=16$ (dimensional of problem), $p = 5$ (number of nonlinear equations), $\gamma = 0.01$ (smoothing parameter), $\eta~=~0.01$ (fixed step size), $B = \{1,10\}$ (batch size), $\beta = 3$ (order of smoothness).}
          \label{fig:Compare_Batch_size}
        \end{minipage}%
        \begin{minipage}{.05\textwidth}
        \textcolor{white}{hello}
        \end{minipage}%
        \begin{minipage}{.49\textwidth}
          \centering
          \includegraphics[width=1.0\linewidth]{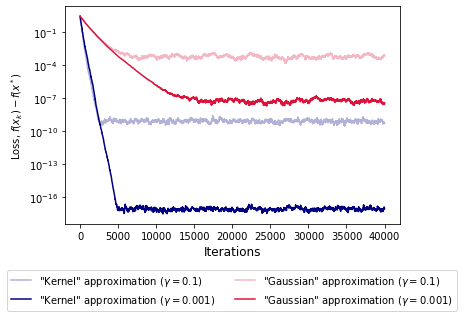}
          \captionof{figure}{\small Effect of the parameter $\gamma$ (smoothing parameter) on the error floor. Here we optimize $f(x)$ with parameters: $d=128$ (dimensional of problem), $p~=~16$ (number of nonlinear equations), $\gamma = \{ 0.1, 0.001 \}$ (smoothing parameter), $\eta~=~0.01$ (fixed step size), $B = 2$ (batch size), $\beta = 3$~(order~of~smoothness).}
          \label{fig:Effect_of_smoothing_parameter}
        \end{minipage}
    \end{figure}
    
    Figure \ref{fig:Effect_of_smoothing_parameter} shows the effect of the smoothing parameter $\gamma$ on the error floor. We can observe that the~parameter $\gamma$ directly affects the error floor. \newal{W}e also note that with different parameters $\gamma$ the Zero-Order Mini-batch SGD (see Algorithm \ref{algo: ZO-MB-SGD_algorithm}) retains its significant superiority~over~its~counterpart.
    
    To validate the robustness of our Algorithm \eqref{algo: ZO-MB-SGD_algorithm}, we now introduce the following gradient approximation via $L_2$ randomization which is well considered e.g. in \cite{Gasnikov_ICML}:
    \begin{equation}\label{L2_randomization}
        \tilde{\gg}(x,\ee)=d \frac{f(x + \gamma \ee) - f(x - \gamma \ee)}{2 \gamma} \ee,
    \end{equation}
    where $\ee$ is uniformly distributed on $S_2^d(1)$. Then Figure~\ref{fig:Effect_of_beta_parameter_without} demonstrates the effect of the smoothness order parameter on the convergence rate and the error floor, where "$L_2$ approximation" is a~gradient-free counterpart of algorithm \cite{Stich_2020} with \eqref{L2_randomization}. We see that $L_2$ approximation has the same convergence rate as "Gaussian" approximation, but has a better error floor (i.e. shows efficiency of the randomization). We also observe that the advantages of the high order of smoothness improve the error floor of Algorithm \ref{algo: ZO-MB-SGD_algorithm}. By comparing the $L_2$ approximation with Algorithm \ref{algo: ZO-MB-SGD_algorithm}, we see the efficiency of the "Kernel" (in terms of the error floor), which uses information about highly smoothness. See Appendix \ref{Appendix:Fig_4} for the effect of the parameters $d$ and $p$ on the convergence rate and the error floor.

    \begin{figure*}[!h]
    	\centering
    	\begin{minipage}{\textwidth}
    	    \centering
        	\hfill
        	\includegraphics[width=0.85\linewidth]{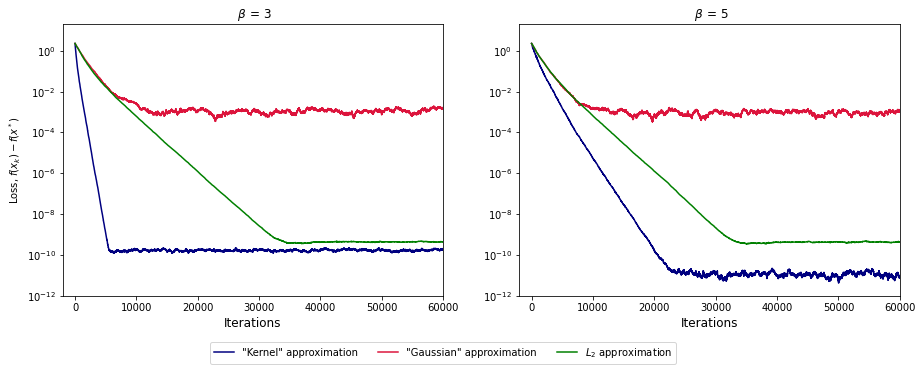}
        	\hfill \null
        	\caption{%
            Effect of the parameter $\beta$ on the convergence rate and the error floor. Here we optimize $f(x)$ with the parameters: $d=256$ (dimensional of problem), $p = 32$ (number of nonlinear equations), $\gamma = 0.1$ (smoothing parameter), $\eta~=~0.01$ (fixed step size), $B = 10$ (batch size), $\beta = \{3, 5\}$ (order of smoothness).}%
        	\label{fig:Effect_of_beta_parameter_without}
    	\end{minipage}
    \end{figure*}
    

\section{Conclusions}\label{Section:Conclusions}

    We proposed a Zero-Order Minibatch SGD method for solving smooth nonconvex optimization problems under the Polyak--Lojasiewicz condition. We generalized our results to stochastic problems under adversarial noise, considering the possible options: when two-point feedback is available and when only one-point feedback is available. Our algorithm showed efficiency on the standard (for~problems under the Polyak--Lojasiewicz condition) example of solving a system of nonlinear~equations.



\begin{center}
{\large CONFLICT OF INTEREST}
\end{center}
The authors declare that they have no conflict if interest.

\newpage

\printbibliography

\newpage

\newpage
\appendix

\section{Analyzing effect of problem parameters on algorithms}\label{Appendix:Fig_4}

    In Figure \ref{fig:Effects_of_dimensionality} we show the effect of the parameters $d$ and $p$ on the convergence rate and error floor. We can observe the behavior of the algorithms via the following approaches of approximating the gradient oracle (see Definition \ref{Definition_1}): "Kernel" approximation, "Gaussian" approximation and~$L_2$~approximation at different parameters of the dimensional of the problem and the number of nonlinear equations. When increasing the number of nonlinear equations, we see that all algorithms worsen the convergence rate, but do not change the error floor. And when increasing the dimensional of the problem, we observe that only the "Kernel" approximation and the $L_2$ approximation improve the error floor (i.e. shows the efficiency of the randomization). We note that Algorithm \ref{algo: ZO-MB-SGD_algorithm} surpasses its counterparts in all the above cases (at different parameters), thereby confirming that the Algorithm \ref{algo: ZO-MB-SGD_algorithm} is robust for solving the problem under the Polyak--Lojasiewicz condition~in~general.
    
    \begin{figure}[!h]
    	\centering
    	\begin{minipage}{\textwidth}
    	    \centering
        	\hfill
        	\includegraphics[width=1.0\linewidth]{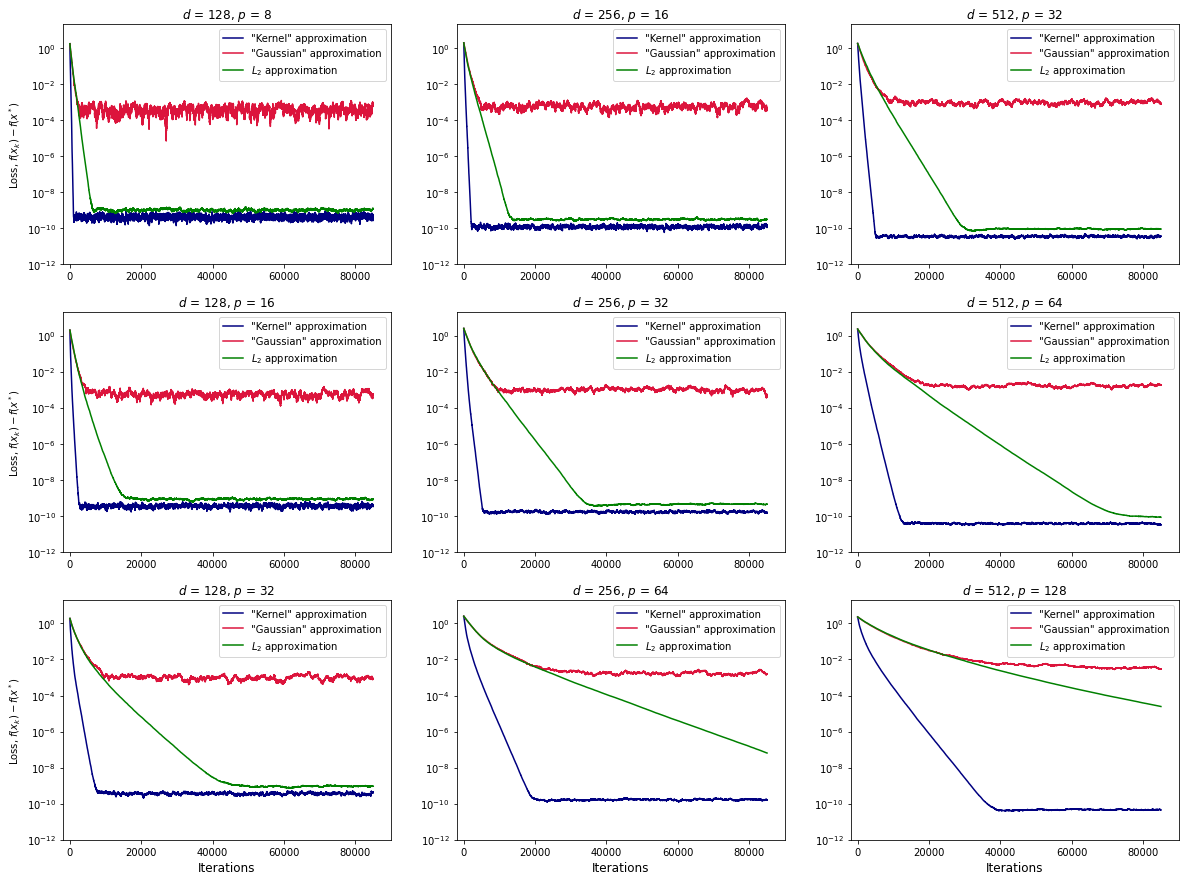}
        	\hfill \null
        	\caption{%
            Effect of the parameters $d$ (dimensional of problem) and $p$ (number of nonlinear equations) on the convergence rate and the error floor. Here we optimize $f(x)$ with the following parameters: $d=\{128, 256, 512\}$ (dimensional of problem), $p = \{8, 16, 32, 64, 128 \}$ (number of nonlinear equations), $\gamma = 0.1$ (smoothing parameter), $\eta~=~0.01$ (fixed step size), $B = 10$ (batch size), $\beta = 3$ (order of smoothness).}%
        	\label{fig:Effects_of_dimensionality}
    	\end{minipage}
    \end{figure}


\section{Auxiliary Facts and Results}

    In this section we list the auxiliary facts and results that we use several times in our proofs.
    
    \subsection{Squared norm of the sum} For all $a_1,...,a_n \in \mathbb{R}^d$, where $n=\{2,3\}$
    \begin{equation}
        \label{Squared_norm_of_the_sum}
        \|a_1 + ... + a_n \|_2^2 \leq n \| a_1 \|_2^2 + ... + n \| a_n \|_2^2.
    \end{equation}
    
    \subsection{Fact from concentration of the measure}
    Let $\ee$ is uniformly distributed on the Euclidean unit sphere, then, for $d \geq 8$, $\forall s \in \mathbb{R}^d$
    \begin{equation}
        \label{Concentration_measure}
        \mathbb{E}_\ee \left( \dotprod{s}{\ee}^2\right) \leq \frac{\| s \|_2^2}{d}. 
    \end{equation}
    
    \subsection{Gaussian smoothing. Upper bounds for the moments}
    \textbf{(Proved in Lemma 1, \cite{Nesterov_2017}).} Let $\uu \sim \mathcal{N}(0,1)$ is a random Gaussian vector, then we have
    \begin{align}
         \text{for } p \in [0,2]&:  &\mathbb{E}_u[\| u \|_2^p] &\leq d^{p/2}; 
         \label{2_moment_for_u} \\ 
         \text{for } p \geq 2&:  &\mathbb{E}_u[\| u \|_2^p] &\leq (p + d)^{p/2}. \label{p_moment_for_u}
    \end{align}
    
    \subsection{Fact from Gaussian approximation }
    \textbf{(Proved in Theorem 3, \cite{Nesterov_2017}).} Let $f$ is differentiable at $x \in \mathbb{R}^d$ and $\uu \sim \mathcal{N}(0,1)$ is normally distributed random Gaussian vector, then we have
    \begin{equation}
        \label{Fact_for_second_moment_Gaussian}
        \mathbb{E}_u [\dotprod{\nabla f(x)}{u}^2 \| u \|_2^2] \leq (d+4) \| \nabla f(x) \|^2_2.
    \end{equation}
    
    \subsection{Taylor expansion} Using the Taylor expansion we have
    \begin{equation}
        \label{Taylor_expansion_1}
        f(x+ \gamma r e) = f(x) + \dotprod{\nabla f(x)}{\gamma r e} + \sum_{2 \leq |n| \leq l} \frac{(r \gamma)^{|n|}}{n!} D^{(n)} f(x) e^n + R(\gamma r e),
    \end{equation}
    \newal{whereby} assumption 
    \begin{equation}\label{Taylor_expansion_2}
        |R(\gamma r e)| \leq L \| \gamma r e \|_2^\beta = L |r|^\beta \gamma^\beta.
    \end{equation}
    
    \subsection{Kernel property} If $e$ is uniformly distributed on unit sphere we have $\mathbb{E}[e e^{\text{T}}] = (1/d)I_{d \times d}$, where $I_{d \times d}$ is the identity matrix. Therefore, using the facts $\mathbb{E}[r K(r)] = 1$ and $\mathbb{E}[r^{|n|} K(r)] = 0$ for $2 \leq |n| \leq l$ we have
        \begin{equation}\label{Kernel_property}
            \mathbb{E}\left[ \frac{d}{\gamma} \left( \dotprod{\nabla f(x)}{\gamma r e} + \sum_{2 \leq |n| \leq l} \frac{(r \gamma)^{|n|}}{n!} D^{(n)} f(x) e^n \right) K(r) e  \right] = \nabla f(x).
        \end{equation}

    \subsection{Bounds of the Weighted Sum of Legendre Polynomials}
    Let $\kappa_\beta = \int |u|^\beta |K(u)| du$ and set $\kappa = \int K^2(u) du$. Then if $K$ be a weighted sum of Legendre polynomials, then it is proved in (see Appendix A.3, \cite{Bach_2016}) that $\kappa_\beta$ and $\kappa$ do not depend on $d$, they~depend~only~on~$\beta$, such that for $\beta \geq 1$: 
        \begin{equation}\label{eq_remark_1}
            \kappa_\beta \leq 2 \sqrt{2} (\beta-1),
        \end{equation}
        \begin{equation}\label{eq_remark_2}
            \kappa \leq 3 \beta^{3}.
        \end{equation}
    

    
    \section{The convergence rate of SGD with biased gradient}\label{Appendix:proof_Lemma}
    \begin{lemma}[Lemma \ref{Itog_theorem_all_in_text}]
       Let $\{ x_k\}_{k \geq 0}$ denote the iterates of Algorithm Mini-batch SGD, function $f$ satisfy Assumptions \ref{Assumption_1}-\ref{Assumption_2} with $\beta = 2$, and the gradient oracle \eqref{eq:Definition_1} satisfy Assumptions \ref{Assumption_3}-\ref{Assumption_4}. Then there exists step size $\eta~\leq~ \frac{1}{(M+1) L_2}$ such that it hold  $\forall N \geq 0$ and an arbitrary batch size $B$
       \begin{equation}
       \label{Itog_theorem_all}
            \mathbb{E}[f(x_N)] - f^* \leq (1 - \eta \mu (1- m))^N (f(x_0) - f^*) + \frac{ \zeta^2}{2 \mu (1- m)}  + \frac{\eta L_2 \sigma^2}{2 B \mu (1- m)}  .
        \end{equation}
       where $L_2$ is constant of the Lipschitz gradient such that $\| \nabla f(x) - \nabla f(y) \|_2 \leq L_2 \| x - y \|_2$.
    \end{lemma}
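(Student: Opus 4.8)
The plan is to follow the standard descent-lemma argument for biased SGD, specialized to the PL setting, and to verify that batching enters only through the variance term. First I would invoke $L_2$-smoothness (which follows from Assumption~\ref{Assumption_1} with $\beta=2$) to write the descent inequality $f(x_{k+1}) \le f(x_k) - \eta \langle \nabla f(x_k), \mathbf{g}_k\rangle + \frac{L_2\eta^2}{2}\|\mathbf{g}_k\|_2^2$, where $\mathbf{g}_k = \frac1B\sum_{i=1}^B \mathbf{g}(x_k,\xi_i)$ and each summand decomposes as in \eqref{eq:Definition_1}.

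Next I would take the conditional expectation over the batch randomness. Since the noise $\mathbf{n}$ is zero-mean, $\mathbb{E}[\mathbf{g}_k\mid x_k] = \nabla f(x_k) + \mathbf{b}(x_k)$, so the inner-product term becomes $\langle \nabla f(x_k), \nabla f(x_k)+\mathbf{b}(x_k)\rangle$. For the second moment, independence of the $B$ samples yields the variance reduction $\mathbb{E}[\|\mathbf{g}_k\|_2^2\mid x_k] \le \|\nabla f(x_k)+\mathbf{b}(x_k)\|_2^2 + \frac1B \mathbb{E}_\xi\|\mathbf{n}(x_k,\xi)\|_2^2$, and Assumption~\ref{Assumption_3} bounds the last term by $\frac1B(M\|\nabla f(x_k)+\mathbf{b}(x_k)\|_2^2 + \sigma^2)$. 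The crucial algebraic step is the polarization identity $\langle a, a+b\rangle = \frac12\|a\|_2^2 + \frac12\|a+b\|_2^2 - \frac12\|b\|_2^2$ with $a=\nabla f(x_k)$, $b=\mathbf{b}(x_k)$: it isolates a $\|\nabla f(x_k)+\mathbf{b}(x_k)\|_2^2$ contribution whose coefficient is $-\frac{\eta}{2} + \frac{L_2\eta^2}{2}(1+M/B)$, which the step-size choice $\eta \le \frac{1}{(M+1)L_2}$ renders nonpositive for every $B\ge 1$ (since $1+M/B \le 1+M$). Dropping this term leaves $\mathbb{E}[f(x_{k+1})\mid x_k] \le f(x_k) - \frac{\eta}{2}\|\nabla f(x_k)\|_2^2 + \frac{\eta}{2}\|\mathbf{b}(x_k)\|_2^2 + \frac{L_2\eta^2\sigma^2}{2B}$.

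Then I would bound the bias via Assumption~\ref{Assumption_4}, $\|\mathbf{b}(x_k)\|_2^2 \le m\|\nabla f(x_k)\|_2^2 + \zeta^2$, collapsing the gradient terms into $-\frac{\eta}{2}(1-m)\|\nabla f(x_k)\|_2^2$, and apply the PL inequality \eqref{eq:Assumption_2} to replace $\|\nabla f(x_k)\|_2^2$ by $2\mu(f(x_k)-f^*)$. Subtracting $f^*$ and taking full expectation gives the one-step contraction $\mathbb{E}[f(x_{k+1})]-f^* \le (1-\eta\mu(1-m))(\mathbb{E}[f(x_k)]-f^*) + \frac{\eta\zeta^2}{2} + \frac{L_2\eta^2\sigma^2}{2B}$. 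Finally I would unroll this linear recursion over $N$ steps and sum the geometric series $\sum_{j\ge 0}(1-\eta\mu(1-m))^j = \frac{1}{\eta\mu(1-m)}$, which converts the per-step additive constant into exactly the two error-floor terms $\frac{\zeta^2}{2\mu(1-m)}$ and $\frac{\eta L_2\sigma^2}{2B\mu(1-m)}$.

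The main obstacle is handling the bias cleanly: the cross term $\langle \nabla f(x_k), \mathbf{b}(x_k)\rangle$ has no definite sign, so without the polarization identity and the precise step-size constraint it would not be absorbable, and one must check that the same fixed step size $\eta \le \frac{1}{(M+1)L_2}$ works uniformly in the batch size $B$ — the binding case being $B=1$. Everything else is routine, provided the $B$ batch samples are drawn independently so that the noise variance scales as $1/B$.
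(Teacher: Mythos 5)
Your proof is correct and follows essentially the same route as the paper's own: the $L_2$-smoothness descent inequality, the bias--variance split of $\mathbb{E}[\mathbf{g}_k]$ and $\mathbb{E}\|\mathbf{g}_k\|_2^2$, the completion-of-the-square via $\pm\|\nabla f(x_k)\|_2^2$ (your polarization identity is exactly this step), absorption of the $\|\nabla f(x_k)+\mathbf{b}(x_k)\|_2^2$ term by the choice $\eta \le \frac{1}{(M+1)L_2}$, then Assumption~4, the PL inequality, and unrolling the linear recursion with the geometric-series bound. The only difference is one of care rather than of method: you propagate the batch size $B$ through the variance bound explicitly (correctly observing that $1+M/B \le 1+M$ makes the same step size work uniformly in $B$, with $B=1$ binding), whereas the paper proves the one-sample case and introduces the $1/B$ scaling of $\sigma^2$ only through the terse closing remark ``adding batching''---so your write-up actually fills in the one step the paper leaves implicit.
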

    \begin{proof}
         By the $L_2$-smoothness of $f$ and choice of the step size $\eta \leq \frac{1}{(M+1)L_2}$ we have
    \begin{align}
        \mathbb{E}[f(&x_{k+1})] \leq f(x_k) + \dotprod{\nabla f(x_k)}{x_{k+1} - x_k} + \frac{L_2}{2} \| x_{k+1} - x_k \|_2^2 
        \nonumber \\
        &\leq  f(x_k) - \eta \dotprod{\nabla f(x_k)}{\mathbb{E}[\gg_k]} + \frac{\eta^2 L_2}{2} \left( \mathbb{E}\left[ \|  \gg_k - \mathbb{E}[\gg_k] \|_2^2 \right] + \mathbb{E}\left[ \| \mathbb{E} [\gg_k] \|_2^2 \right] \right) 
        \nonumber \\
        & \overset{\eqref{eq:Definition_1}}{=} f(x_k) - \eta \dotprod{\nabla f(x_k)}{\nabla f(x_k) + \bb(x_k)} + \frac{\eta^2 L_2}{2} \left( \mathbb{E}\left[ \| \nn(x_k, \xi) \|_2^2 \right] + \mathbb{E}\left[ \| \nabla f(x_k) + \bb(x_k) \|_2^2 \right] \right) 
        \nonumber \\
        & \overset{\eqref{eq:Assumption_3}}{\leq} f(x_k) - \eta \dotprod{\nabla f(x_k)}{\nabla f(x_k) + \bb(x_k)} + \frac{\eta^2 L_2}{2} \left( (M+1) \mathbb{E}\left[ \|\nabla f(x_k) + \bb(x_k) \|_2^2 \right] + \sigma^2 \right) 
        \nonumber \\
        & = f(x_k) + \frac{\eta}{2} \left(\pm \|\nabla f(x_k) \|^2_2 -2 \dotprod{\nabla f(x_k)}{\nabla f(x_k) + \bb(x_k)} + \|\nabla f(x_k) + \bb(x_k) \|_2^2 \right) + \frac{\eta^2 L_2}{2}  \sigma^2 
        \nonumber \\
        & = f(x_k) + \frac{\eta}{2} \left(- \|\nabla f(x_k) \|^2_2 + \| \bb(x_k) \|_2^2 \right) + \frac{\eta^2 L_2}{2}  \sigma^2 
        \nonumber \\
        & \overset{\eqref{eq:Assumption_2}, \eqref{eq:Assumption_4}}{\leq} (1 - \eta \mu (1- m))(f(x_k) - f^*) + \frac{\eta \zeta^2}{2}  + \frac{\eta^2 L_2}{2}  \sigma^2 + f^* .
        \label{Poly_Itog_theorem_1}
    \end{align}
    Applying recursion to \eqref{Poly_Itog_theorem_1} and adding batching (with batch size $B$) we obtain
    \begin{equation*}
        \mathbb{E}[f(x_N)] - f^* \leq (1 - \eta \mu (1- m))^N (f(x_0) - f^*) + \frac{ \zeta^2}{2 \mu (1- m)}  + \frac{\eta L_2 \sigma^2}{2 B \mu (1- m)}  .
    \end{equation*}
    \end{proof}


\textbf{Further structure of the appendix} has close to block form. In particular, the first block (\newal{Appendices} \ref{Appendix:Proof_Theorem_1}-\ref{Appendix:Proof_combine}) describes in detail the obtaining results for the approach proposed in this article to develop a gradient-free algorithm. Namely, the approach that implied using the "kernel-based" approximation (Kernel approximation) instead of the gradient oracle \eqref{eq:Definition_1}. While the second block (\newal{Appendices}\ref{Gaussian_Proof_Theorem1}-\ref{Gaussian_Proof_Theorem4}) provides a detailed description of obtaining results for the competing approach of developing gradient-free algorithms. In particular, the approach that uses Gaussian approximation instead of the gradient oracle \eqref{eq:Definition_1}. Each block considers different cases of a zero-order oracle. For instance, Appendix \ref{Appendix:Proof_Theorem_1} and \ref{Gaussian_Proof_Theorem1} consider zero-order oracle with adversarial deterministic noise described in Section \ref{Section:Convergence_results}. The stochastic case with the same concept of adversarial noise in a zero-order oracle generating a gradient approximation with two-point feedback, which is briefly described in the discussion of the results of Theorem \ref{Theorem:1}, is discussed in Appendix \ref{Appendix:Proof_two_point} and \ref{Gaussian_Proof_Theorem2}. Also in Appendix~\ref{Appendix:Proof_add_noise}~and~\ref{Gaussian_Proof_Theorem3}, the case of a zero-order oracle with adversarial stochastic noise, described in Subsection~\ref{Subsection:Stochastic_noise}, is considered in detail. \newal{The final} case in each block (Appendix \ref{Appendix:Proof_combine} and \ref{Gaussian_Proof_Theorem4}) is \newal{a} zero-order oracle case, combining adversarial deterministic and stochastic noise considered in Subsection~\ref{Subsection:One_point_feedback}. Note that the last two considered cases of zero-order oracle generate a gradient approximation with one-point feedback (see~Section~\ref{Section:Extended_analysis}).

\section{Proof of Theorem \ref{Theorem:1}}\label{Appendix:Proof_Theorem_1}
   
\subsection{Kernel approximation}
    The "kernel-based" approximation of gradient \newal{has} the following form \eqref{zero_order_gradient}:
    \begin{equation*}
        \mathbf{\Tilde{g}}(x,\ee) = d \frac{\Tilde{f}(x+\gamma r \ee) - \Tilde{f}(x - \gamma r \ee)}{2 \gamma} K(r) \ee,
    \end{equation*}
    where $\Tilde{f}(x)$ is defined in \eqref{zero_order_oracle_1}.
\subsection{Bias square}
        By definition \eqref{zero_order_gradient} we have
        \begin{alignat}{2}
            \| \mathbb{E}[\mathbf{\Tilde{g}}(x,&\ee)] - \nabla f(x) \|_2 = \| \frac{d}{2 \gamma} \mathbb{E} \left[ \left( \Tilde{f}(x + \gamma r \ee) - \Tilde{f}(x - \gamma r \ee) \right) \ee K(r) \right] - \nabla f(x)\|_2 
            \nonumber\\ 
            &\overset{\eqref{zero_order_oracle_1}}{=} \phantom{5} \| \frac{d}{2 \gamma} \mathbb{E} \left[ \left( f(x + \gamma r \ee) - f(x - \gamma r \ee) + \delta(x + \gamma r \ee) - \delta(x - \gamma r \ee) \right) \ee K(r)\right] - \nabla f(x) \|_2 
            \nonumber\\ 
            &\overset{\eqref{Taylor_expansion_1}}{=} \phantom{5} \| \frac{d}{\gamma} \mathbb{E} [(\dotprod{\nabla f(x)}{\gamma r \ee} + \sum_{2 \leq |n| \leq l \; \text{odd}} \frac{(r \gamma)^{|n|}}{n!} D^{(n)} f(x) \ee^n + \frac{R(\gamma r \ee) - R(-\gamma r \ee)}{2}  
            \nonumber\\ 
            &\phantom{= \phantom{15} \| \frac{d}{\gamma} \mathbb{E} [(\dotprod{\nabla f(x)}{\gamma r \ee}}\; 
            +  \frac{\delta(x + \gamma r \ee) - \delta(x - \gamma r \ee)}{2} ) \ee K(r)] - \nabla f(x)\|_2
            \nonumber\\ 
            &\overset{\eqref{Kernel_property}}{=} \phantom{5} \| \frac{d}{2 \gamma} \mathbb{E} \left[ \left(R(\gamma r \ee) - R(-\gamma r \ee) + \delta(x + \gamma r \ee) - \delta(x - \gamma r \ee) \right) \ee K(r) \right] \| _2
            \nonumber\\
            & \leq \phantom{15} 
            \frac{d}{2 \gamma} \mathbb{E} \left[ |R(\gamma r \ee) - R(-\gamma r \ee) + \delta(x + \gamma r \ee) - \delta(x - \gamma r \ee)|  |K(r)| \right] 
            \nonumber\\
            &\overset{\eqref{Taylor_expansion_2}}{\leq} \phantom{5} \kappa_\beta d \left( L_\beta \gamma^{\beta-1} + \frac{\Delta}{\gamma} \right). \qquad /* \mbox{ the distribution of $\ee$ is symmetric  } */ \nonumber
        \end{alignat}
        Then, we can find bias square
        \begin{equation}\label{preparing_bias_square_Kernel}
           \| \mathbf{b}(x) \|_2^2 = \| \mathbb{E}[\mathbf{\Tilde{g}}(x,\ee)] - \nabla f(x) \|_2^2  \overset{\eqref{Squared_norm_of_the_sum}}{\leq} 2 \kappa_\beta^2 d^2 L_\beta^2 \gamma^{2(\beta-1)} + 2\frac{\kappa_\beta^2 d^2 \Delta^2}{\gamma^2}.
        \end{equation}
        
\subsection{Second moment}
        By definition \eqref{zero_order_gradient} we have
        \begin{eqnarray} 
            \mathbb{E} \left[ \| \mathbf{\Tilde{g}}(x,e) \|^2_2\right] &=& \frac{d^2}{4 \gamma^2} \mathbb{E} \left[\|\left (f(x+\gamma r e) - f(x - \gamma r e) + \delta(x + \gamma r e) - \delta(x - \gamma r e) \right) K(r) e\|^2_2 \right] 
            \nonumber\\
            &=& \frac{d^2}{4\gamma^2} \mathbb{E} \left[  \left(f(x+\gamma r e) - f(x - \gamma r e) + 2 \delta(x + \gamma r e) \right)^2 K^2(r) \| e \|_2^2 \right]  
            \nonumber\\
            &\overset{\eqref{Squared_norm_of_the_sum}}{\leq}& \frac{ d^2}{2 \gamma^2} \left( \mathbb{E} \left[ \left(f(x+\gamma r e) - f(x - \gamma r e) \right)^2 K^2(r)\right] + 4 \kappa \Delta^2 \right). \label{eq_1_for_theorem_1}
        \end{eqnarray}
        
        Using fact $\sqrt{\mathbb{E} \left[ \| e \|_q^4 \right]} \leq \min \left\{ q-1, 16 \ln d -8 \right\} d^{2/q - 1}$ to the first multiplier \eqref{eq_1_for_theorem_1} we get
        \begin{align}
            \frac{d^2}{2 \gamma^2} \mathbb{E}_{e}  [( f(x+\gamma r e) &- f(x - \gamma r e) )^2 ] = \frac{d^2}{2 \gamma^2} \mathbb{E}_{e} \left[ \left( (f(x+\gamma r e)  - f(x - \gamma r e) \pm f(x) \pm 2 \dotprod{\nabla f(x)}{\gamma r e} \right)^2 \right]  
            \nonumber \\
            & \overset{\eqref{Squared_norm_of_the_sum}}{\leq} \phantom{5}  \frac{3 d^2}{2 \gamma^2} \mathbb{E}_{ e} \left[ ( f(x+\gamma r e) - f(x) - \dotprod{\nabla f(x)}{\gamma r e} \right)^2
            \nonumber\\
            & \phantom{\leq \phantom{5} \frac{3 d^2}{2 \gamma^2} \mathbb{E}}  + \left( f(x-\gamma r e) - f(x) - \dotprod{\nabla f(x)}{-\gamma r e} )^2 + 4 \dotprod{\nabla f(x)}{\gamma r e}^2 \right]
            \nonumber \\ 
            & \leq \phantom{15} \frac{3 d^2}{2 \gamma^2} \mathbb{E}_{ e} \left[ \frac{L_2^2}{2} \| \gamma r e\|_2^4 + 4 \dotprod{\nabla f(x)}{\gamma r e}^2 \right] 
            \label{eq_2_for_theorem_1} \\ 
            &\overset{\eqref{Concentration_measure}}{\leq} \phantom{5} \frac{3 d^2}{2 \gamma^2} \left( \frac{L_2^2 \gamma^4}{2} \mathbb{E}_{ e} \left[\| e\|_2^4 \right]  + \frac{4 \gamma^2 \| \nabla f(x) \|_2^2}{d} \right)
            \nonumber \\
            & \leq \phantom{15} 6 d \| \nabla f(x) \|_2^2 + \frac{3  d^2 L_2^2 \gamma^2}{4}, \label{eq_3_for_theorem_1}
        \end{align}
        where \eqref{eq_2_for_theorem_1} we obtained by applying the property of the Lipschitz continuous gradient.
        
        By substituting \eqref{eq_3_for_theorem_1} into \eqref{eq_1_for_theorem_1} and using independence of $e$ and $r$ we obtain
        
        \begin{equation}\label{preparing_second_moment_Kernel}
            \mathbb{E} \left[ \| \mathbf{\Tilde{g}}(x,e) \|_2^2\right] \leq \kappa \left( 6 d \| \nabla f(x) \|_2^2 + \frac{3  d^2 L_2^2 \gamma^2}{4} +  \frac{2d^2 \Delta^2}{\gamma^2} \right). 
        \end{equation}
        \newpage
\subsection{Convergence rate}
        From the inequalities \eqref{preparing_bias_square_Kernel} and \eqref{preparing_second_moment_Kernel} we can conclude that Assumption \ref{Assumption_3} holds with the choice
        \begin{equation}\label{parameters_Ass:3_Theorem1_Kernel}
            \sigma^2 \overset{\eqref{eq_remark_2}}{=} \mathcal{O} \left(\beta^3  d^2  L_2^2 \gamma^2 + \frac{\beta^3 d^2 \Delta^2}{\gamma^2} \right), \;\;\;  M = \mathcal{O} \left( \beta^3 d \right).
        \end{equation}
        and that Assumption \ref{Assumption_4} also holds with the choice
        \begin{equation}\label{parameters_Ass:4_Theorem1_Kernel}
            m=0, \zeta^2 \overset{\eqref{eq_remark_1}}{=} \mathcal{O} \left( \beta^2 d^2 L_\beta^2 \gamma^{2(\beta-1)} + \frac{\beta^2 d^2 \Delta^2}{\gamma^2} \right) .
        \end{equation} 
        
        Now to find the asymptote to which the Algorithm \ref{algo: ZO-MB-SGD_algorithm} converges with the gradient approximation \eqref{zero_order_gradient}, substitute the parameters \eqref{parameters_Ass:3_Theorem1_Kernel}, \eqref{parameters_Ass:4_Theorem1_Kernel} into the second and third terms \eqref{Itog_theorem_all} with $\eta = \mathcal{O} \left( 1/M \right)$:
        \begin{equation*}
            \mathbb{E}[f(x_N)] - f^* \leq  \frac{ \zeta^2}{2 \mu (1- m)}  + \frac{\eta L_2 \sigma^2}{2 B \mu (1- m)} = \mathcal{O} \left( \beta^2 d^2 L_\beta^2 \gamma^{2(\beta-1)} + \frac{\beta^2 d^2 \Delta^2}{\gamma^2} + \frac{d L_2^2 \gamma^2}{B} + \frac{d \Delta^2}{B \gamma^2} \right).
        \end{equation*}
        
        Since $B$ can be taken as large, the first two terms are responsible for the asymptote. We find the optimal smoothing parameter~$\gamma$ that minimizes the first two terms:
        \begin{eqnarray}
            \mathbb{E}[f(x_N)] - f^* \leq \beta^2 d^2 \gamma^{2(\beta-1)} + \frac{\beta^2 d^2 \Delta^2}{\gamma^2} = \mathcal{O} \left(d^{2} \Delta^{\frac{2(\beta-1)}{\beta}}\right), \label{assimptota_theorem1_Kernel}
        \end{eqnarray}
        where $\gamma =  \Delta^{1/\beta}$  is optimal smoothing parameter.
        Then from \eqref{assimptota_theorem1_Kernel} we can find the maximum noise level, assuming that~$d^{2} \Delta \leq \varepsilon$, for $\varepsilon > 0$ then we have
        \begin{equation*}
             \Delta = \mathcal{O}\left( \varepsilon^{\frac{\beta}{2(\beta-1)}} d^{\frac{-\beta}{\beta-1}} \right) .
        \end{equation*}
        

\section{Proof of convergence of Algorithm \ref{algo: ZO-MB-SGD_algorithm} with two-point feedback}\label{Appendix:Proof_two_point}
   
\subsection{Kernel approximation}
    The "kernel-based" approximation of gradient has the following form:
    \begin{equation}\label{zero_order_gradient_two_point}
        \mathbf{\Tilde{g}}(x,\xi,\ee) = d \frac{\Tilde{f}(x+\gamma r \ee,\xi) - \Tilde{f}(x - \gamma r \ee,\xi)}{2 \gamma} K(r) \ee,
    \end{equation}
    where the zero-order oracle $\Tilde{f}(x,\xi)$ is defined as follows:
   \begin{equation}\label{zero_order_oracle_2}
       \Tilde{f}(x,\xi) = f(x,\xi) + \delta(x),
   \end{equation}
   $\delta(x)$ is adversarial deterministic noise, $|\delta(x)| \leq \Delta$ is a level of noise.
\subsection{Bias square}
        By definition \eqref{zero_order_gradient_two_point} we have
        \begin{align*}
            \| \mathbb{E}[\mathbf{\Tilde{g}}(x,&\xi,\ee)] - \nabla f(x) \|_2 = \| \frac{d}{2 \gamma} \mathbb{E} \left[ \left( \Tilde{f}(x + \gamma r \ee,\xi) - \Tilde{f}(x - \gamma r \ee,\xi) \right) \ee K(r) \right] - \nabla f(x)\|_2 
            \nonumber\\ 
            &\overset{\eqref{Stochastic_Problem},\eqref{zero_order_oracle_2}}{=} 
            \| \frac{d}{2 \gamma} \mathbb{E} \left[ \left( f(x + \gamma r \ee) - f(x - \gamma r \ee) + \delta(x + \gamma r \ee) - \delta(x - \gamma r \ee) \right) \ee K(r)\right] - \nabla f(x)\|_2 
            \nonumber\\ 
            &\overset{\eqref{Taylor_expansion_1}}{=} \phantom{88}
            \| \frac{d}{\gamma} \mathbb{E} [(\dotprod{\nabla f(x)}{\gamma r \ee} + \sum_{2 \leq |n| \leq l \; \text{odd}} \frac{(r \gamma)^{|n|}}{n!} D^{(n)} f(x) \ee^n + \frac{R(\gamma r \ee) - R(-\gamma r \ee)}{2}   
            \nonumber\\ 
            & \phantom{\overset{\eqref{Taylor_expansion_1}}{=} \phantom{88} \| \frac{d}{\gamma} \mathbb{E} [(\dotprod{\nabla f(x)}{\gamma r \ee}}\;
            + \frac{\delta(x + \gamma r \ee) - \delta(x - \gamma r \ee)}{2} ) \ee K(r)] - \nabla f(x)\|_2
            \nonumber\\ 
            &\overset{\eqref{Kernel_property}}{=} \phantom{88}
            \| \frac{d}{2 \gamma} \mathbb{E} \left[ \left(R(\gamma r \ee) - R(-\gamma r \ee) + \delta(x + \gamma r \ee) - \delta(x - \gamma r \ee) \right) \ee K(r) \right] \|_2 
            \nonumber\\
            & \leq \phantom{888}
            \frac{d}{2 \gamma} \mathbb{E} \left[ |R(\gamma r \ee) - R(-\gamma r \ee) + \delta(x + \gamma r \ee) - \delta(x - \gamma r \ee)|  |K(r)| \right] 
            \nonumber\\
            &\overset{\eqref{Taylor_expansion_2}}{\leq} \phantom{88}
            \kappa_\beta d \left( L_\beta \gamma^{\beta-1} + \frac{\Delta}{\gamma} \right). \qquad /* \mbox{ the distribution of $\ee$ is symmetric  } */ \nonumber
        \end{align*}
        Then, we can find bias square
        \begin{equation}\label{preparing_bias_square_Kernel_Theorem2}
           \| \mathbf{b}(x) \|_2^2 = \| \mathbb{E}[\mathbf{\Tilde{g}}(x,\xi,\ee)] - \nabla f(x) \|_2^2 \leq  \left(\kappa_\beta d L_\beta \gamma^{\beta-1} + \frac{\kappa_\beta d \Delta}{\gamma} \right)^2 \overset{\eqref{Squared_norm_of_the_sum}}{\leq} \frac{2\kappa_\beta^2 d^2}{\gamma^2} \left( L_\beta^2 \gamma^{2\beta} +  \Delta^2\right). 
        \end{equation}
        
\subsection{Second moment}
        By definition \eqref{zero_order_gradient_two_point} we have
        \begin{align} 
            \mathbb{E} \left[ \| \mathbf{\Tilde{g}}(x,\xi,e) \|^2_2\right] &= \phantom{88} \frac{d^2}{4 \gamma^2} \mathbb{E} \left[\|\left (f(x+\gamma r e,\xi) - f(x - \gamma r e,\xi) + \delta(x + \gamma r e) - \delta(x - \gamma r e) \right) K(r) e\|^2_2 \right] 
            \nonumber\\
            &= \phantom{88}
            \frac{d^2}{4\gamma^2} \mathbb{E} \left[  \left(f(x+\gamma r e,\xi) - f(x - \gamma r e,\xi) + 2 \delta(x + \gamma r e) \right)^2 K^2(r) \| e \|_2^2 \right]  
            \nonumber\\
            &\overset{\eqref{Squared_norm_of_the_sum}}{\leq} \phantom{8}
            \frac{ d^2}{2 \gamma^2} \left( \mathbb{E} \left[ \left(f(x+\gamma r e,\xi) - f(x - \gamma r e,\xi) \right)^2 K^2(r)\right] + 4 \kappa \Delta^2 \right).  \label{eq_1_for_theorem_2}
        \end{align}
        
        Using fact $\sqrt{\mathbb{E} \left[ \| e \|_q^4 \right]} \leq \min \left\{ q-1, 16 \ln d -8 \right\} d^{2/q - 1}$ to the first multiplier \eqref{eq_1_for_theorem_2} we get
        \begin{align}
            \frac{d^2}{2 \gamma^2} \mathbb{E}_{e}  [( f(x&+\gamma r e,\xi) - f(x - \gamma r e,\xi) )^2] 
            \nonumber \\
            & = \phantom{88}
            \frac{d^2}{2 \gamma^2} \mathbb{E}_{e} \left[ \left( (f(x+\gamma r e,\xi)  - f(x - \gamma r e,\xi) \pm f(x,\xi) \pm 2 \dotprod{\nabla f(x,\xi)}{\gamma r e} \right)^2 \right]  
            \nonumber \\
            & \overset{\eqref{Squared_norm_of_the_sum}}{\leq} \phantom{8}
            \frac{3 d^2}{2 \gamma^2} \mathbb{E}_{ e} \left[ ( f(x+\gamma r e,\xi) - f(x,\xi) - \dotprod{\nabla f(x,\xi)}{\gamma r e} \right)^2
            \nonumber\\
            & \phantom{\overset{\eqref{Squared_norm_of_the_sum}}{\leq}
            \frac{3 d^2}{2 \gamma^2} \mathbb{E}_{e}} + 
            \left( f(x-\gamma r e,\xi) - f(x,\xi) - \dotprod{\nabla f(x,\xi)}{-\gamma r e} )^2 + 4 \dotprod{\nabla f(x,\xi)}{\gamma r e}^2 \right]
            \nonumber \\ 
            &\leq  \phantom{88}
            \frac{3 d^2}{2 \gamma^2} \mathbb{E}_{ e} \left[ \frac{L_2^2}{2} \| \gamma r e\|_2^4 + 4 \dotprod{\nabla f(x,\xi)}{\gamma r e}^2 \right] 
            \label{eq_2_for_theorem_2} \\ 
            &\overset{\eqref{Concentration_measure}}{\leq} \phantom{8}
            \frac{3 d^2}{2 \gamma^2} \left( \frac{L_2^2 \gamma^4}{2} \mathbb{E}_{ e} \left[\| e\|_2^4 \right]  + \frac{4 \gamma^2 \| \nabla f(x,\xi) \|_2^2}{d} \right)
            \nonumber \\
            & \leq \phantom{88}
            6 d \| \nabla f(x,\xi) \|_2^2 + \frac{3  d^2 L_2^2 \gamma^2}{4}, \label{eq_3_for_theorem_2}
        \end{align}
        where \eqref{eq_2_for_theorem_2} we obtained by applying the property of the Lipschitz continuous gradient.
        
        By substituting \eqref{eq_3_for_theorem_2} into \eqref{eq_1_for_theorem_2} and using independence of $e$ and $r$ we obtain
        
        \begin{equation}\label{preparing_second_moment_Kernel_Theorem2}
            \mathbb{E} \left[ \| \mathbf{\Tilde{g}}(x,\xi,e) \|_2^2\right] \leq \kappa \left( 6 d \| \nabla f(x,\xi) \|_2^2 + \frac{3  d^2 L_2^2 \gamma^2}{4} +  \frac{2d^2 \Delta^2}{\gamma^2} \right). 
        \end{equation}
        
\subsection{Convergence rate}
        From the inequalities \eqref{preparing_bias_square_Kernel_Theorem2} and \eqref{preparing_second_moment_Kernel_Theorem2} we can conclude that Assumption \ref{Assumption_3} holds with the choice
        \begin{equation}\label{parameters_Ass:3_Theorem2_Kernel}
            \sigma^2 \overset{\eqref{eq_remark_2}}{=} \mathcal{O} \left(\beta^3  d^2  L_2^2 \gamma^2 + \frac{\beta^3 d^2 \Delta^2}{\gamma^2} \right), \;\;\;  M = \mathcal{O} \left( \beta^3 d \right).
        \end{equation}
        and that Assumption \ref{Assumption_4} also holds with the choice
        \begin{equation}\label{parameters_Ass:4_Theorem2_Kernel}
            m=0, \zeta^2 \overset{\eqref{eq_remark_1}}{=} \mathcal{O} \left( \beta^2 d^2 L_\beta^2 \gamma^{2(\beta-1)} + \frac{\beta^2 d^2 \Delta^2}{\gamma^2} \right) .
        \end{equation} 
        
        Now to find the asymptote to which the Algorithm \ref{algo: ZO-MB-SGD_algorithm} converges with the gradient approximation \eqref{zero_order_gradient_two_point}, substitute the parameters \eqref{parameters_Ass:3_Theorem2_Kernel}, \eqref{parameters_Ass:4_Theorem2_Kernel} into the second and third terms \eqref{Itog_theorem_all} with $\eta = \mathcal{O} \left( 1/M \right)$:
        \begin{equation*}
            \mathbb{E}[f(x_N)] - f^* \leq  \frac{ \zeta^2}{2 \mu (1- m)}  + \frac{\eta L_2 \sigma^2}{2 B \mu (1- m)} = \mathcal{O} \left( \beta^2 d^2 L_\beta^2 \gamma^{2(\beta-1)} + \frac{\beta^2 d^2 \Delta^2}{\gamma^2} + \frac{d L_2^2 \gamma^2}{B} + \frac{d \Delta^2}{B \gamma^2} \right).
        \end{equation*}
        
        Since $B$ can be taken as large, the first two terms are responsible for the asymptote. We find the optimal smoothing parameter~$\gamma$ that minimizes the first two terms:
        \begin{eqnarray}
             \mathbb{E}[f(x_N)] - f^* \leq \beta^2 d^2 \gamma^{2(\beta-1)} + \frac{\beta^2 d^2 \Delta^2}{\gamma^2} = \mathcal{O} \left(d^{2} \Delta^{\frac{2(\beta-1)}{\beta}}\right), \label{assimptota_theorem2_Kernel}
        \end{eqnarray}
        where $\gamma =  \Delta^{1/\beta}$  is optimal smoothing parameter.
        Then from \eqref{assimptota_theorem2_Kernel} we can find the maximum noise level, assuming that~$d^{2} \Delta \leq \varepsilon$, for $\varepsilon > 0$ then we have
        \begin{equation*}
             \Delta = \mathcal{O}\left( \varepsilon^{\frac{\beta}{2(\beta-1)}} d^{\frac{-\beta}{\beta-1}} \right) .
        \end{equation*}


\section{Proof of Theorem \ref{Theorem:add_noise}}\label{Appendix:Proof_add_noise}
   
\subsection{Kernel approximation}
    The "kernel-based" approximation of gradient has the following form \eqref{zero_order_gradient_add_noise}:
    \begin{equation*}
        \mathbf{\Tilde{g}}(x,\xi,\ee) = d \frac{\Tilde{f}(x+\gamma r \ee,\xi_1) - \Tilde{f}(x - \gamma r \ee,\xi_2)}{2 \gamma} K(r) \ee,
    \end{equation*}
    where $\Tilde{f}(x,\xi)$ is defined in \eqref{zero_order_oracle_3}.
\subsection{Bias square}
        By definition \eqref{zero_order_gradient_add_noise} we have
        \begin{align*}
            \| \mathbb{E}[\mathbf{\Tilde{g}}(x,\xi,\ee)] - \nabla f(x) \|_2 \phantom{88} &= \phantom{88}
            \| \frac{d}{2 \gamma} \mathbb{E} \left[ \left( \Tilde{f}(x + \gamma r \ee,\xi) - \Tilde{f}(x - \gamma r \ee,\xi) \right) \ee K(r) \right] - \nabla f(x)\|_2 
            \nonumber\\ 
            & \overset{\eqref{zero_order_oracle_3}}{=} \phantom{8}\,
            \| \frac{d}{2 \gamma} \mathbb{E} \left[ \left( f(x + \gamma r \ee) - f(x - \gamma r \ee) + \xi_1 - \xi_2 \right) \ee K(r)\right] - \nabla f(x)\|_2 
            \nonumber\\ 
            &\overset{\eqref{Taylor_expansion_1}}{=} \phantom{8}\,
            \| \frac{d}{\gamma} \mathbb{E} [(\dotprod{\nabla f(x)}{\gamma r \ee} + \sum_{2 \leq |n| \leq l \; \text{odd}} \frac{(r \gamma)^{|n|}}{n!} D^{(n)} f(x) \ee^n
            \nonumber\\ 
            & \phantom{\overset{\eqref{Taylor_expansion_1}}{=} \phantom{8}\,
            \| \frac{d}{\gamma} \mathbb{E} [(\dotprod{\nabla f(x)}{\gamma r \ee}}\;
            +
            \frac{R(\gamma r \ee) - R(-\gamma r \ee)}{2} ) \ee K(r)] - \nabla f(x)\|_2
            \nonumber\\ 
            &\overset{\eqref{Kernel_property}}{=} \phantom{8}\;
            \| \frac{d}{2 \gamma} \mathbb{E} \left[ \left(R(\gamma r \ee) - R(-\gamma r \ee) \right) \ee K(r) \right] \|_2 
            \nonumber\\
            &\leq \phantom{88}
            \frac{d}{2 \gamma} \mathbb{E} \left[ |R(\gamma r \ee) - R(-\gamma r \ee)|  |K(r)| \right] 
            \nonumber\\
            &\overset{\eqref{Taylor_expansion_2}}{\leq} \phantom{8}\;
            \kappa_\beta d  L_\beta \gamma^{\beta-1}. \nonumber
        \end{align*}
        Then, we can find bias square
        \begin{equation}\label{preparing_bias_square_Kernel_Theorem3}
           \| \mathbf{b}(x) \|_2^2 = \| \mathbb{E}[\mathbf{\Tilde{g}}(x,\xi,\ee)] - \nabla f(x) \|_2^2 \leq  \left(\kappa_\beta d L_\beta \gamma^{\beta-1} \right)^2 = \kappa_\beta^2 d^2 L_\beta^2 \gamma^{2(\beta-1)}. 
        \end{equation}
        
\subsection{Second moment}
        By definition \eqref{zero_order_gradient_add_noise} we have
        \begin{eqnarray} 
            \mathbb{E} \left[ \| \mathbf{\Tilde{g}}(x,\xi,e) \|^2_2\right] &=& \frac{d^2}{4 \gamma^2} \mathbb{E} \left[\|\left (f(x+\gamma r e) - f(x - \gamma r e) + \xi_1 - \xi_2 \right) K(r) e\|^2_2 \right] 
            \nonumber\\
            &=& \frac{d^2}{4\gamma^2} \mathbb{E} \left[  \left(f(x+\gamma r e) - f(x - \gamma r e) + \xi_1 - \xi_2 \right)^2 K^2(r) \| e \|_2^2 \right]  
            \nonumber\\
            &\overset{\eqref{Squared_norm_of_the_sum}}{\leq}& \frac{ d^2}{2 \gamma^2} \left( \mathbb{E} \left[ \left(f(x+\gamma r e) - f(x - \gamma r e) \right)^2 K^2(r)\right] + 4 \kappa \Tilde{\Delta}^2 \right).  \label{eq_1_for_theorem_3}
        \end{eqnarray}
        
        Using fact $\sqrt{\mathbb{E} \left[ \| e \|_q^4 \right]} \leq \min \left\{ q-1, 16 \ln d -8 \right\} d^{2/q - 1}$ to the first multiplier \eqref{eq_1_for_theorem_2} we get
        \begin{align}
            \frac{d^2}{2 \gamma^2} \mathbb{E}_{e}&  \left[\left( f(x+\gamma r e) - f(x - \gamma r e) \right)^2 \right] 
            \nonumber \\
            & = \phantom{88}
            \frac{d^2}{2 \gamma^2} \mathbb{E}_{e} \left[ \left( (f(x+\gamma r e)  - f(x - \gamma r e) \pm f(x) \pm 2 \dotprod{\nabla f(x)}{\gamma r e} \right)^2 \right]  
            \nonumber \\
            & \overset{\eqref{Squared_norm_of_the_sum}}{\leq} \phantom{8}
            \frac{3 d^2}{2 \gamma^2} \mathbb{E}_{ e} \left[ ( f(x+\gamma r e) - f(x) - \dotprod{\nabla f(x)}{\gamma r e} \right)^2
            \nonumber\\
            & \phantom{ \overset{\eqref{Squared_norm_of_the_sum}}{\leq} \, \frac{3 d^2}{2 \gamma^2} \mathbb{E}_{ e} }
            + 
            \left( f(x-\gamma r e) - f(x) - \dotprod{\nabla f(x)}{-\gamma r e} )^2 + 4 \dotprod{\nabla f(x)}{\gamma r e}^2 \right]
            \nonumber \\ 
            &\leq \phantom{88}
            \frac{3 d^2}{2 \gamma^2} \mathbb{E}_{ e} \left[ \frac{L_2^2}{2} \| \gamma r e\|_2^4 + 4 \dotprod{\nabla f(x)}{\gamma r e}^2 \right] 
            \label{eq_2_for_theorem_3} \\ 
            &\overset{\eqref{Concentration_measure}}{\leq} \phantom{8}
            \frac{3 d^2}{2 \gamma^2} \left( \frac{L_2^2 \gamma^4}{2} \mathbb{E}_{ e} \left[\| e\|_2^4 \right]  + \frac{4 \gamma^2 \| \nabla f(x) \|_2^2}{d} \right)
            \nonumber \\
            & \leq \phantom{88}
            6 d \| \nabla f(x) \|_2^2 + \frac{3  d^2 L_2^2 \gamma^2}{4}, \label{eq_3_for_theorem_3}
        \end{align}
        where \eqref{eq_2_for_theorem_3} we obtained by applying the property of the Lipschitz continuous gradient.
        
        By substituting \eqref{eq_3_for_theorem_3} into \eqref{eq_1_for_theorem_3} and using independence of $e$ and $r$ we obtain
        
        \begin{equation}\label{preparing_second_moment_Kernel_Theorem3}
            \mathbb{E} \left[ \| \mathbf{\Tilde{g}}(x,\xi,e) \|_2^2\right] \leq \kappa \left( 6 d \| \nabla f(x) \|_2^2 + \frac{3  d^2 L_2^2 \gamma^2}{4} +  \frac{2d^2 \Tilde{\Delta^2}}{\gamma^2} \right). 
        \end{equation}
        
\subsection{Convergence rate}
        From the inequalities \eqref{preparing_bias_square_Kernel_Theorem3} and \eqref{preparing_second_moment_Kernel_Theorem3} we can conclude that Assumption \ref{Assumption_3} holds with the choice
        \begin{equation}\label{parameters_Ass:3_Theorem3_Kernel}
            \sigma^2 \overset{\eqref{eq_remark_2}}{=} \mathcal{O} \left(\beta^3  d^2  L_2^2 \gamma^2 + \frac{\beta^3 d^2 \Tilde{\Delta}^2}{\gamma^2} \right), \;\;\;  M = \mathcal{O} \left( \beta^3 d \right).
        \end{equation}
        and that Assumption \ref{Assumption_4} also holds with the choice
        \begin{equation}\label{parameters_Ass:4_Theorem3_Kernel}
            m=0, \zeta^2 \overset{\eqref{eq_remark_1}}{=} \mathcal{O} \left( \beta^2 d^2 L_\beta^2 \gamma^{2(\beta-1)} \right) .
        \end{equation} 
        
        Now to find the asymptote to which the Algorithm \ref{algo: ZO-MB-SGD_algorithm} converges with the gradient approximation \eqref{zero_order_gradient_add_noise}, substitute the parameters \eqref{parameters_Ass:3_Theorem3_Kernel}, \eqref{parameters_Ass:4_Theorem3_Kernel} into the second and third terms \eqref{Itog_theorem_all} with $\eta = \mathcal{O} \left( 1/M \right)$:
        \begin{equation*}
            \mathbb{E}[f(x_N)] - f^* \leq  \frac{ \zeta^2}{2 \mu (1- m)}  + \frac{\eta L_2 \sigma^2}{2 B \mu (1- m)} = \mathcal{O} \left( \beta^2 d^2 L_\beta^2 \gamma^{2(\beta-1)} + \frac{d L_2^2 \gamma^2}{B} + \frac{d \Tilde{\Delta}^2}{B\gamma^2} \right).
        \end{equation*}
        
        Since the batch size $B$ can be taken large enough, there are no restrictions on the smoothing parameter~$\gamma$. Therefore, using the concept of the zero-order oracle \eqref{zero_order_oracle_3} Zero-order Mini-batch SGD (see Algorithm \ref{algo: ZO-MB-SGD_algorithm}) with gradient approximation \eqref{zero_order_gradient_add_noise} can achieve the desired accuracy.

\section{Proof of Theorem \ref{Theorem:combine}}\label{Appendix:Proof_combine}
   
\subsection{Kernel approximation}
    The "kernel-based" approximation of gradient has the following form \eqref{zero_order_gradient_one_point}:
    \begin{equation*}
        \mathbf{\Tilde{g}}(x,\xi,\ee) = d \frac{\Tilde{f}(x+\gamma r \ee,\xi_1) - \Tilde{f}(x - \gamma r \ee,\xi_2)}{2 \gamma} K(r) \ee,
    \end{equation*}
    where $\Tilde{f}(x,\xi)$ is defined in \eqref{zero_order_oracle_4}.
\subsection{Bias square}
        By definition \eqref{zero_order_gradient_one_point} we have
        \begin{align*}
            \| \mathbb{E}[\mathbf{\Tilde{g}}(x,&\xi,\ee)] - \nabla f(x) \|_2 = \| \frac{d}{2 \gamma} \mathbb{E} \left[ \left( \Tilde{f}(x + \gamma r \ee,\xi_1) - \Tilde{f}(x - \gamma r \ee,\xi_2) \right) \ee K(r) \right] - \nabla f(x)\|_2 
            \nonumber\\ 
            &\overset{\eqref{Stochastic_Problem},\eqref{zero_order_oracle_4}}{=}
            \| \frac{d}{2 \gamma} \mathbb{E} \left[ \left( f(x + \gamma r \ee) - f(x - \gamma r \ee) + \delta(x + \gamma r \ee) - \delta(x - \gamma r \ee) \right) \ee K(r)\right] - \nabla f(x)\|_2 
            \nonumber\\ 
            &\overset{\eqref{Taylor_expansion_1}}{=} \phantom{88}\,
            \| \frac{d}{\gamma} \mathbb{E} [(\dotprod{\nabla f(x)}{\gamma r \ee} + \sum_{2 \leq |n| \leq l \; \text{odd}} \frac{(r \gamma)^{|n|}}{n!} D^{(n)} f(x) \ee^n + \frac{R(\gamma r \ee) - R(-\gamma r \ee)}{2}   
            \nonumber\\ 
            &\phantom{\overset{\eqref{Taylor_expansion_1}}{=} \phantom{88}\,
            \| \frac{d}{\gamma} \mathbb{E} [(\dotprod{\nabla f(x)}{\gamma r \ee}} \;
            +
            \frac{\delta(x + \gamma r \ee) - \delta(x - \gamma r \ee)}{2} ) \ee K(r)] - \nabla f(x)\|_2
            \nonumber\\ 
            &\overset{\eqref{Kernel_property}}{=} \phantom{88}\,
            \| \frac{d}{2 \gamma} \mathbb{E} \left[ \left(R(\gamma r \ee) - R(-\gamma r \ee) + \delta(x + \gamma r \ee) - \delta(x - \gamma r \ee) \right) \ee K(r) \right] \|_2 
            \nonumber\\
            &\leq \phantom{888}
            \frac{d}{2 \gamma} \mathbb{E} \left[ |R(\gamma r \ee) - R(-\gamma r \ee) + \delta(x + \gamma r \ee) - \delta(x - \gamma r \ee)|  |K(r)| \right] 
            \nonumber\\
            &\overset{\eqref{Taylor_expansion_2}}{\leq} \phantom{88}\;
            \kappa_\beta d \left( L_\beta \gamma^{\beta-1} + \frac{\Delta}{\gamma} \right). \qquad /* \mbox{ the distribution of $\ee$ is symmetric  } */ \nonumber
        \end{align*}
        Then, we can find bias square
        \begin{equation}\label{preparing_bias_square_Kernel_Theorem4}
           \| \mathbf{b}(x) \|_2^2 = \| \mathbb{E}[\mathbf{\Tilde{g}}(x,\xi,\ee)] - \nabla f(x) \|_2^2  \overset{\eqref{Squared_norm_of_the_sum}}{\leq} 2 \kappa_\beta^2 d^2 L_\beta^2 \gamma^{2(\beta-1)} + 2 \frac{\kappa_\beta^2 d^2 \Delta^2}{\gamma^2}. 
        \end{equation}
        
\subsection{Second moment}
        By definition \eqref{zero_order_gradient_one_point} we have
        \begin{align} 
            \mathbb{E} \left[ \| \mathbf{\Tilde{g}}(x,\xi,e) \|^2_2\right] &= \frac{d^2}{4 \gamma^2} \mathbb{E} \left[\|\left (f(x+\gamma r e) - f(x - \gamma r e) + \delta(x + \gamma r \ee) - \delta(x - \gamma r e) + \xi_1 - \xi_2 \right) K(r) e\|^2_2 \right] 
            \nonumber\\
            &=
            \frac{d^2}{4\gamma^2} \mathbb{E} \left[  \left(f(x+\gamma r e) - f(x - \gamma r e) + 2 \delta(x + \gamma r e) + \xi_1 - \xi_2 \right)^2 K^2(r) \| e \|_2^2 \right]  
            \nonumber\\
            &\overset{\eqref{Squared_norm_of_the_sum}}{\leq}
            \frac{ d^2}{2 \gamma^2} \left( \mathbb{E} \left[ \left(f(x+\gamma r e) - f(x - \gamma r e) \right)^2 K^2(r)\right] + 4 \kappa \Delta^2 + 4 \kappa \Tilde{\Delta}^2 \right).  \label{eq_1_for_theorem_4}
        \end{align}
        
        Using fact $\sqrt{\mathbb{E} \left[ \| e \|_q^4 \right]} \leq \min \left\{ q-1, 16 \ln d -8 \right\} d^{2/q - 1}$ to the first multiplier \eqref{eq_1_for_theorem_4} we get
        \begin{align}
           \frac{d^2}{2 \gamma^2} \mathbb{E}_{e}  [( f(x&+\gamma r e) - f(x - \gamma r e) )^2 ] 
           \nonumber \\
           &= \phantom{88} \frac{d^2}{2 \gamma^2} \mathbb{E}_{e} \left[ \left( (f(x+\gamma r e)  - f(x - \gamma r e) \pm f(x) \pm 2 \dotprod{\nabla f(x)}{\gamma r e} \right)^2 \right]  
            \nonumber \\
            & \overset{\eqref{Squared_norm_of_the_sum}}{\leq} \phantom{8}
            \frac{3 d^2}{2 \gamma^2} \mathbb{E}_{ e} \left[ ( f(x+\gamma r e) - f(x) - \dotprod{\nabla f(x)}{\gamma r e} \right)^2
            \nonumber\\
            & \phantom{\leq \phantom{8}
            \frac{3 d^2}{2 \gamma^2} \mathbb{E}}
            + 
            \left( f(x-\gamma r e) - f(x) - \dotprod{\nabla f(x)}{-\gamma r e} )^2 + 4 \dotprod{\nabla f(x)}{\gamma r e}^2 \right]
            \nonumber \\ 
            &\leq \phantom{88}
            \frac{3 d^2}{2 \gamma^2} \mathbb{E}_{ e} \left[ \frac{L_2^2}{2} \| \gamma r e\|_2^4 + 4 \dotprod{\nabla f(x)}{\gamma r e}^2 \right] 
            \label{eq_2_for_theorem_4} \\ 
            &\overset{\eqref{Concentration_measure}}{\leq} \phantom{8}
            \frac{3 d^2}{2 \gamma^2} \left( \frac{L_2^2 \gamma^4}{2} \mathbb{E}_{ e} \left[\| e\|_2^4 \right]  + \frac{4 \gamma^2 \| \nabla f(x) \|_2^2}{d} \right)
            \nonumber \\
            & \leq \phantom{88}
            6 d \| \nabla f(x) \|_2^2 + \frac{3  d^2 L_2^2 \gamma^2}{4}, \label{eq_3_for_theorem_4}
        \end{align}
        where \eqref{eq_2_for_theorem_4} we obtained by applying the property of the Lipschitz continuous gradient.
        
        By substituting \eqref{eq_3_for_theorem_4} into \eqref{eq_1_for_theorem_4} and using independence of $e$ and $r$ we obtain
        
        \begin{equation}\label{preparing_second_moment_Kernel_Theorem4}
            \mathbb{E} \left[ \| \mathbf{\Tilde{g}}(x,\xi,e) \|_2^2\right] \leq \kappa \left( 6 d \| \nabla f(x,\xi) \|_2^2 + \frac{3  d^2 L_2^2 \gamma^2}{4} +  \frac{2d^2 (\Delta^2 + \Tilde{\Delta}^2)}{\gamma^2} \right). 
        \end{equation}
        
\subsection{Convergence rate}
        From the inequalities \eqref{preparing_bias_square_Kernel_Theorem4} and \eqref{preparing_second_moment_Kernel_Theorem4} we can conclude that Assumption \ref{Assumption_3} holds with the choice
        \begin{equation}\label{parameters_Ass:3_Theorem4_Kernel}
            \sigma^2 \overset{\eqref{eq_remark_2}}{=} \mathcal{O} \left(\beta^3  d^2  L_2^2 \gamma^2 + \frac{\beta^3 d^2 (\Delta^2 + \Tilde{\Delta}^2)}{\gamma^2} \right), \;\;\;  M = \mathcal{O} \left( \beta^3 d \right).
        \end{equation}
        and that Assumption \ref{Assumption_4} also holds with the choice
        \begin{equation}\label{parameters_Ass:4_Theorem4_Kernel}
            m=0, \zeta^2 \overset{\eqref{eq_remark_1}}{=} \mathcal{O} \left( \beta^2 d^2 L_\beta^2 \gamma^{2(\beta-1)} + \frac{\beta^2 d^2 \Delta^2}{\gamma^2} \right) .
        \end{equation} 
        
        Now to find the asymptote to which the Algorithm \ref{algo: ZO-MB-SGD_algorithm} converges with the gradient approximation \eqref{zero_order_gradient_one_point}, substitute the parameters \eqref{parameters_Ass:3_Theorem4_Kernel}, \eqref{parameters_Ass:4_Theorem4_Kernel} into the second and third terms \eqref{Itog_theorem_all} with $\eta = \mathcal{O} \left( 1/M \right)$:
        \begin{equation*}
            \mathbb{E}[f(x_N)] - f^* \leq  \frac{ \zeta^2}{2 \mu (1- m)}  + \frac{\eta L_2 \sigma^2}{2 B \mu (1- m)} = \mathcal{O} \left( \beta^2 d^2 L_\beta^2 \gamma^{2(\beta-1)} + \frac{\beta^2 d^2 \Delta^2}{\gamma^2} + \frac{d L_2^2 \gamma^2}{B} + \frac{d (\Delta^2 + \Tilde{\Delta}^2)}{B \gamma^2} \right).
        \end{equation*}
        
        Since $B$ can be taken as large, the first two terms are responsible for the asymptote. We find the optimal smoothing parameter~$\gamma$ that minimizes the first two terms:
        \begin{eqnarray}
             \mathbb{E}[f(x_N)] - f^* \leq \beta^2 d^2 \gamma^{2(\beta-1)} + \frac{\beta^2 d^2 \Delta^2}{\gamma^2} = \mathcal{O} \left(d^{2} \Delta^{\frac{2(\beta-1)}{\beta}}\right), \label{assimptota_theorem4_Kernel}
        \end{eqnarray}
        where $\gamma =  \Delta^{1/\beta}$  is optimal smoothing parameter.
        Then from \eqref{assimptota_theorem4_Kernel} we can find the maximum noise level, assuming that~$d^{2} \Delta \leq \varepsilon$, for $\varepsilon > 0$ then we have
        \begin{equation*}
             \Delta = \mathcal{O}\left( \varepsilon^{\frac{\beta}{2(\beta-1)}} d^{\frac{-\beta}{\beta-1}} \right) .
        \end{equation*}

\section{Gaussian smoothing approach for Theorem \ref{Theorem:1}}\label{Gaussian_Proof_Theorem1}
    \subsection{Definition Gaussian approximation}
    Let $\uu \sim \mathcal{N}(0,1)$ is a random Gaussian vector, $\gamma>0$ is smoothing parameter then Gaussian smoothing approximation has the following~form:
    \begin{equation}
        \label{Gaussian_approximation_grad_Theorem_1}
        \Tilde{\gg}(x,\uu) = \frac{\Tilde{f}(x + \gamma \uu) - \Tilde{f}(x)}{\gamma} \uu,
    \end{equation}
    where $\Tilde{f}$ is defined in \eqref{zero_order_oracle_1}.
    \subsection{Bias square}
    By definition Gaussian approximation \eqref{Gaussian_approximation_grad_Theorem_1} we have
    \begin{align*}
            \left\| \mathbb{E}[\Tilde{\gg}(x,\uu)] - \nabla f(x) \right\|_2  &\phantom{88} = \phantom{88} \left\| \frac{1}{\gamma} \mathbb{E} \left[ \left( \Tilde{f}(x + \gamma \uu) - \Tilde{f}(x) \right) \uu \right] - \nabla f(x)\right\|_2 
            \nonumber\\ 
            &\phantom{88} \overset{\eqref{zero_order_oracle_1}}{=} \phantom{88}
            \left\| \frac{1}{\gamma} \mathbb{E} \left[ \left( f(x + \gamma \uu) - f(x) + \delta(x + \gamma \uu) - \delta(x) \right) \uu \right] - \nabla f(x)\right\|_2  
            \nonumber\\ 
            &\phantom{88} = \phantom{88}
            \left\| \frac{1}{\gamma} \mathbb{E}_{\uu} \left[ \left( f(x + \gamma \uu) - f(x) - \gamma \dotprod{\nabla f(x)}{\uu} + \delta(x + \gamma \uu) - \delta(x) \right) \uu  \right]\right\|_2
            \nonumber\\ 
            & \phantom{88} \leq \phantom{88}
            L_2 \gamma \mathbb{E}_{\uu} \left\| \uu \right\|_2^3 + \frac{\Delta}{\gamma} \mathbb{E}_{\uu} \left\|\uu \right\|_2 \qquad /* \mbox{ the distribution of $\uu$ is symmetric  } */ \nonumber \\
            &\overset{\eqref{2_moment_for_u}, \eqref{p_moment_for_u}}{\leq} 
            L_2 \gamma d^{3/2} + \frac{\Delta}{\gamma} d^{1/2}.
        \end{align*}
        Then we can find the bias square:
        \begin{equation}\label{preparing_bias_square_Gaussian_Theorem1}
            \|\bb(x) \|_2^2 = \left\| \mathbb{E}[\mathbf{\Tilde{g}}(x,\uu)] - \nabla f(x) \right\|^2_2 \leq \left(L_2 \gamma d^{3/2} + \frac{\Delta}{\gamma} d^{1/2} \right)^2 \overset{\eqref{Squared_norm_of_the_sum}}{\leq} 2 L_2^2 \gamma^2 d^{3} + 2 \frac{\Delta^2}{\gamma^2} d.
        \end{equation}
        
        \subsection{Second moment}
        By definition \eqref{Gaussian_approximation_grad_Theorem_1} we have
        \begin{align}
            \mathbb{E} \| \Tilde{\gg}(x,\uu) \|^2_2 & \phantom{888} = \phantom{888} \mathbb{E} \left[ \| \frac{1}{\gamma} \left( \Tilde{f}(x + \gamma \uu) - \Tilde{f}(x) \right) \uu \|^2_2 \right] 
            \nonumber \\
            & \phantom{888}\overset{\eqref{zero_order_oracle_1}}{\leq} \phantom{888}
            \frac{1}{\gamma^2} \mathbb{E} \left[ \left( f(x + \gamma \uu) - f(x) + \delta(x + \gamma \uu) - \delta(x) \right)^2 \| \uu\|^2_2 \right] \nonumber \\
            &\phantom{888} \leq  \phantom{888}
            \frac{1}{\gamma^2}\mathbb{E} \left[ \left( f(x + \gamma \uu) - f(x) \pm \gamma \dotprod{\nabla f(x)}{\uu} + \delta(x + \gamma \uu) - \delta(x) \right)^2 \| \uu\|^2_2 \right]
            \nonumber \\
            &\phantom{888} \overset{\eqref{Squared_norm_of_the_sum}}{\leq} \phantom{888}
            \frac{3}{\gamma^2}  L_2^2 \gamma^4 \mathbb{E}_\uu [ \| \uu\|^6_2] + 3 \mathbb{E}_\uu \left[  \dotprod{\nabla f(x)}{\uu}^2 \| \uu\|^2_2 \right] + \frac{3 \Delta^2}{\gamma^2} \mathbb{E}_{\uu} \left[\| \uu\|^2_2 \right] 
            \nonumber \\
            &\overset{\eqref{2_moment_for_u},\eqref{p_moment_for_u}, \eqref{Fact_for_second_moment_Gaussian}}{\leq}
            3 L_2^2 \gamma^2 d^3 + 3 d \| \nabla f(x) \|_2^2 + \frac{3 d \Delta^2}{\gamma^2}. \label{preparing_second_moment_Gaussian_Theorem1}
        \end{align}
        
\subsection{Convergence rate}
        From the inequalities \eqref{preparing_bias_square_Gaussian_Theorem1} and \eqref{preparing_second_moment_Gaussian_Theorem1} we can conclude that Assumption \ref{Assumption_3} holds with the choice
        \begin{equation}\label{parameters_Ass:3_Theorem1_Gaussian}
            \sigma^2 = \mathcal{O} \left( \gamma^2 d^3 + \frac{d \Delta^2}{\gamma^2} \right), \;\;\;  M = \mathcal{O} \left( d \right).
        \end{equation}
        and that Assumption \ref{Assumption_4} also holds with the choice
        \begin{equation}\label{parameters_Ass:4_Theorem1_Gaussian}
            m=0, \zeta^2 = \mathcal{O} \left(\gamma^2 d^{3} + \frac{\Delta^2}{\gamma^2} d \right) .
        \end{equation} 
        
        Now to find the asymptote to which the counterpart of Algorithm \ref{algo: ZO-MB-SGD_algorithm} converges with the approximation \eqref{Gaussian_approximation_grad_Theorem_1}, substitute the parameters \eqref{parameters_Ass:3_Theorem1_Gaussian}, \eqref{parameters_Ass:4_Theorem1_Gaussian} into the second and third terms~\eqref{Itog_theorem_all}~with~$\eta = \mathcal{O} \left( 1/M \right)$:
        \begin{equation*}
            \mathbb{E}[f(x_N)] - f^* \leq  \frac{ \zeta^2}{2 \mu (1- m)}  + \frac{\eta L_2 \sigma^2}{2 B \mu (1- m)} = \mathcal{O} \left( \gamma^2 d^{3} + \frac{\Delta^2}{\gamma^2} d + \frac{\gamma^2 d^2}{B} + \frac{\Delta^2}{B\gamma^2} \right).
        \end{equation*}
        
        Since $B$ can be taken as large, the first two terms are responsible for the asymptote. We find the optimal smoothing parameter~$\gamma$ that minimizes the first two terms:
        \begin{eqnarray}
             \mathbb{E}[f(x_N)] - f^* \leq \gamma^2 d^{3} + \frac{\Delta^2}{\gamma^2} d
            = \mathcal{O} \left(\Delta d^2 \right), \label{assimptota_theorem1_Gaussian}
        \end{eqnarray}
        where $\gamma = \Delta^{1/2} d^{-1/2}$  is optimal smoothing parameter.
        
        Then from \eqref{assimptota_theorem1_Gaussian} we can find the maximum noise level, assuming that $\Delta d^2 \leq \varepsilon$, for $\varepsilon > 0$~then~we~have
        \begin{equation*}
             \Delta = \mathcal{O}\left( \frac{\varepsilon}{d^2} \right) .
        \end{equation*}

\section{Gaussian smoothing approach with two-point feedback}\label{Gaussian_Proof_Theorem2}
    \subsection{Definition Gaussian approximation}
    Let $\uu \sim \mathcal{N}(0,1)$ is a random Gaussian vector, $\gamma>0$ is smoothing parameter then Gaussian smoothing approximation has the following~form:
    \begin{equation}
        \label{Gaussian_approximation_grad_Theorem_2}
        \Tilde{\gg}(x,\xi,\uu) = \frac{\Tilde{f}(x + \gamma \uu, \xi) - \Tilde{f}(x, \xi)}{\gamma} \uu,
    \end{equation}
    where $\Tilde{f}$ is defined in \eqref{zero_order_oracle_2}.
    \subsection{Bias square}
    By definition Gaussian approximation \eqref{Gaussian_approximation_grad_Theorem_2} we have
    \begin{align*}
            \|\bb(x) \|_2 & = \phantom{88,} \| \mathbb{E}[\Tilde{\gg}(x,\xi,\uu)] - \nabla f(x) \|_2    
            \nonumber\\ 
            &  = \phantom{88} \left\| \frac{1}{\gamma} \mathbb{E} \left[ \left( \Tilde{f}(x + \gamma \uu, \xi) - \Tilde{f}(x, \xi) \right) \uu \right] - \nabla f(x)\right\|_2 
            \nonumber\\ 
            &  \!\! \overset{\eqref{zero_order_oracle_2}}{=} \phantom{88}
            \left\| \frac{1}{\gamma} \mathbb{E}_{\uu} \left[ \mathbb{E}_{\xi} \left[ \left( f(x + \gamma \uu, \xi) \right] - \mathbb{E}_{\xi}\left[f(x, \xi)\right] + \delta(x + \gamma \uu) - \delta(x) \right) \uu \right] - \nabla f(x)\right\|_2  
            \nonumber\\ 
            &  \overset{\eqref{Stochastic_Problem}}{=} \phantom{88}
            \left\| \frac{1}{\gamma} \mathbb{E}_{\uu} \left[  \left( f(x + \gamma \uu) - f(x) + \delta(x + \gamma \uu) - \delta(x) \right) \uu \right] - \nabla f(x)\right\|_2  
            \nonumber\\ 
            & = \phantom{88}
            \left\| \frac{1}{\gamma} \mathbb{E}_{\uu} \left[ \left( f(x + \gamma \uu) - f(x) - \gamma \dotprod{\nabla f(x)}{\uu} + \delta(x + \gamma \uu) - \delta(x) \right) \uu  \right]\right\|_2
            \nonumber\\ 
            &  \leq  \phantom{88}
            L_2 \gamma \mathbb{E}_{\uu} \left\| \uu \right\|_2^3 + \frac{\Delta}{\gamma} \mathbb{E}_{\uu} \left\|\uu \right\|_2 \qquad /* \mbox{ the distribution of $\uu$ is symmetric  } */ \nonumber \\
            &\!\!\!\!\! \overset{\eqref{2_moment_for_u}, \eqref{p_moment_for_u}}{\leq}
            L_2 \gamma d^{3/2} + \frac{\Delta}{\gamma} d^{1/2}.
        \end{align*}
        Then we can find the bias square:
        \begin{equation}\label{preparing_bias_square_Gaussian_Theorem2}
            \|\bb(x) \|_2^2 = \left\| \mathbb{E}[\mathbf{\Tilde{g}}(x, \xi,\uu)] - \nabla f(x) \right\|^2_2 \leq \left(L_2 \gamma d^{3/2} + \frac{\Delta}{\gamma} d^{1/2} \right)^2 \overset{\eqref{Squared_norm_of_the_sum}}{\leq} 2 L_2^2 \gamma^2 d^{3} + 2 \frac{\Delta^2}{\gamma^2} d.
        \end{equation}
        
        \subsection{Second moment}
        By definition \eqref{Gaussian_approximation_grad_Theorem_2} we have
        \begin{align}
            \mathbb{E} \| \Tilde{\gg}(x, \xi,\uu) \|^2_2 &\;  = \phantom{888} \mathbb{E} \left[ \| \frac{1}{\gamma} \left( \Tilde{f}(x + \gamma \uu, \xi) - \Tilde{f}(x, \xi) \right) \uu \|^2_2 \right] 
            \nonumber \\
            &  \overset{\eqref{zero_order_oracle_2}}{\leq} \phantom{888}
            \frac{1}{\gamma^2} \mathbb{E} \left[ \left( f(x + \gamma \uu, \xi) - f(x, \xi) + \delta(x + \gamma \uu) - \delta(x) \right)^2 \| \uu\|^2_2 \right] \nonumber \\
            & \; \leq  \phantom{888}
            \frac{1}{\gamma^2}\mathbb{E} \left[ \left( f(x + \gamma \uu, \xi) - f(x, \xi) \pm \gamma \dotprod{\nabla f(x, \xi)}{\uu} + \delta(x + \gamma \uu) - \delta(x) \right)^2 \| \uu\|^2_2 \right]
            \nonumber \\
            & \overset{\eqref{Squared_norm_of_the_sum}}{\leq} \phantom{888}
            \frac{3}{\gamma^2}  L_2^2 \gamma^4 \mathbb{E}_\uu [ \| \uu\|^6_2]  + 3 \mathbb{E}_\xi \left[ \mathbb{E}_\uu \left[  \dotprod{\nabla f(x, \xi)}{\uu}^2 \| \uu\|^2_2 \right]\right] + \frac{3 \Delta^2}{\gamma^2} \mathbb{E}_{\uu} \left[\| \uu\|^2_2 \right] 
            \nonumber \\
            &\!\!\!\!\!\!\!\! \overset{\eqref{2_moment_for_u},\eqref{p_moment_for_u}, \eqref{Fact_for_second_moment_Gaussian}}{\leq}\;
            3 L_2^2 \gamma^2 d^3 + 3 d \| \nabla f(x) \|_2^2 + \frac{3 d \Delta^2}{\gamma^2}. \label{preparing_second_moment_Gaussian_Theorem2}
        \end{align}
        
\subsection{Convergence rate}
        From the inequalities \eqref{preparing_bias_square_Gaussian_Theorem2} and \eqref{preparing_second_moment_Gaussian_Theorem2} we can conclude that Assumption \ref{Assumption_3} holds with the choice
        \begin{equation}\label{parameters_Ass:3_Theorem2_Gaussian}
            \sigma^2 = \mathcal{O} \left( \gamma^2 d^3 + \frac{d \Delta^2}{\gamma^2} \right), \;\;\;  M = \mathcal{O} \left( d \right),
        \end{equation}
        and that Assumption \ref{Assumption_4} also holds with the choice
        \begin{equation}\label{parameters_Ass:4_Theorem2_Gaussian}
            m=0, \zeta^2 = \mathcal{O} \left(\gamma^2 d^{3} + \frac{\Delta^2}{\gamma^2} d \right) .
        \end{equation} 
        
        Now to find the asymptote to which the counterpart of Algorithm \ref{algo: ZO-MB-SGD_algorithm} converges with the approximation \eqref{Gaussian_approximation_grad_Theorem_2}, substitute the parameters \eqref{parameters_Ass:3_Theorem2_Gaussian}, \eqref{parameters_Ass:4_Theorem2_Gaussian} into the second and third terms~\eqref{Itog_theorem_all}~with~$\eta = \mathcal{O} \left( 1/M \right)$:
        \begin{equation*}
            \mathbb{E}[f(x_N)] - f^* \leq  \frac{ \zeta^2}{2 \mu (1- m)}  + \frac{\eta L_2 \sigma^2}{2 B \mu (1- m)} = \mathcal{O} \left( \gamma^2 d^{3} + \frac{\Delta^2}{\gamma^2} d + \frac{\gamma^2 d^2}{B} + \frac{\Delta^2}{B\gamma^2} \right).
        \end{equation*}
        
        Since $B$ can be taken as large, the first two terms are responsible for the asymptote. We find the optimal smoothing parameter~$\gamma$ that minimizes the first two terms:
        \begin{eqnarray}
             \mathbb{E}[f(x_N)] - f^* \leq \gamma^2 d^{3} + \frac{\Delta^2}{\gamma^2} d
            = \mathcal{O} \left(\Delta d^2 \right), \label{assimptota_theorem2_Gaussian}
        \end{eqnarray}
        where $\gamma = \Delta^{1/2} d^{-1/2}$  is optimal smoothing parameter.
        
        Then from \eqref{assimptota_theorem2_Gaussian} we can find the maximum noise level, assuming that $\Delta d^2 \leq \varepsilon$, for $\varepsilon > 0$~then~we~have
        \begin{equation*}
             \Delta = \mathcal{O}\left( \frac{\varepsilon}{d^2} \right) .
        \end{equation*}

\section{Gaussian smoothing approach for Theorem \ref{Theorem:add_noise}}\label{Gaussian_Proof_Theorem3}
    \subsection{Definition Gaussian approximation}
    Let $\uu \sim \mathcal{N}(0,1)$ is a random Gaussian vector, $\gamma>0$ is smoothing parameter then Gaussian smoothing approximation has the following~form:
    \begin{equation}
        \label{Gaussian_approximation_grad_Theorem_3}
        \Tilde{\gg}(x,\xi,\uu) = \frac{\Tilde{f}(x + \gamma \uu, \xi_1) - \Tilde{f}(x,\xi_2)}{\gamma} \uu,
    \end{equation}
    where $\Tilde{f}$ is defined in \eqref{zero_order_oracle_3}.
    \subsection{Bias square}
    By definition Gaussian approximation \eqref{Gaussian_approximation_grad_Theorem_3} we have
        \begin{align}
            \left\| \mathbb{E}[\Tilde{\gg}(x,\xi,\uu)] - \nabla f(x) \right\|_2 &= \phantom{88} \left\| \frac{1}{\gamma} \mathbb{E} \left[ \left( \Tilde{f}(x + \gamma \uu, \xi_1) - \Tilde{f}(x, \xi_2) \right) \uu \right] - \nabla f(x)\right\|_2 
            \nonumber\\ 
            & \! \! \overset{\eqref{zero_order_oracle_3}}{=} \phantom{88}
            \left\| \frac{1}{\gamma} \mathbb{E} \left[ \left( f(x + \gamma \uu) - f(x) + \xi_1 - \xi_2 \right) \uu \right] - \nabla f(x)\right\|_2  
            \nonumber\\ 
            &= \phantom{88}
            \left\| \frac{1}{\gamma} \mathbb{E}_{\uu} \left[ \left( f(x + \gamma \uu) - f(x) - \gamma \dotprod{\nabla f(x)}{\uu} + \xi_1 - \xi_2 \right) \uu  \right]\right\|_2
            \nonumber\\ 
            & \leq  \phantom{88}
            L_2 \gamma \mathbb{E}_{\uu} \left\| \uu \right\|_2^3 
            \label{property_delta_Theorem3_Gaussian} \\
            & \!\!\!\!\! \overset{\eqref{2_moment_for_u}, \eqref{p_moment_for_u}}{\leq}
            L_2 \gamma d^{3/2}.
            \nonumber
        \end{align}
        where we receive \eqref{property_delta_Theorem3_Gaussian} using that $\mathbb{E}[\xi_1 \uu] = 0$ and $\mathbb{E}[\xi_2 \uu] = 0$. 
        
        Then we can find the bias square:
        \begin{equation}\label{preparing_bias_square_Gaussian_Theorem3}
            \|\bb(x) \|_2^2 = \left\| \mathbb{E}[\mathbf{\Tilde{g}}(x,\xi,\uu)] - \nabla f(x) \right\|^2_2 \leq \left(L_2 \gamma d^{3/2} \right)^2 = L_2^2 \gamma^2 d^{3}.
        \end{equation}
        
        \subsection{Second moment}
        By definition \eqref{Gaussian_approximation_grad_Theorem_3} we have
        \begin{align}
            \mathbb{E} \| \Tilde{\gg}(x, \xi,\uu) \|^2_2 &\; = \phantom{888} \mathbb{E} \left[ \| \frac{1}{\gamma} \left( \Tilde{f}(x + \gamma \uu, \xi_1) - \Tilde{f}(x, \xi_2) \right) \uu \|^2_2 \right] 
            \nonumber \\
            &\overset{\eqref{zero_order_oracle_3}}{\leq} \phantom{888}
            \frac{1}{\gamma^2} \mathbb{E} \left[ \left( f(x + \gamma \uu) - f(x) + \xi_1 - \xi_2 \right)^2 \| \uu\|^2_2 \right] \nonumber \\
            &\; \leq \phantom{888}
            \frac{1}{\gamma^2}\mathbb{E} \left[ \left( f(x + \gamma \uu) - f(x) \pm \gamma \dotprod{\nabla f(x)}{\uu} + \xi_1 - \xi_2 \right)^2 \| \uu\|^2_2 \right]
            \nonumber \\
            &\overset{\eqref{Squared_norm_of_the_sum}}{\leq} \phantom{888}
            \frac{3}{\gamma^2}  L_2^2 \gamma^4 \mathbb{E}_\uu \left[ \| \uu\|^6_2 \right]  + 3     \mathbb{E}_\uu \left[ \dotprod{\nabla f(x)}{\uu}^2 \| \uu\|^2_2 \right]  + \frac{3 \Tilde{\Delta}^2}{\gamma^2} \mathbb{E} \left[\| \uu\|^2_2 \right] 
            \nonumber \\
            & \!\!\!\!\!\!\!\!\! \overset{\eqref{2_moment_for_u},\eqref{p_moment_for_u}, \eqref{Fact_for_second_moment_Gaussian}}{\leq} \;
            3 L_2^2 \gamma^2 d^3 + 3 d \| \nabla f(x) \|_2^2 + \frac{3 d \Tilde{\Delta}^2}{\gamma^2}. \label{preparing_second_moment_Gaussian_Theorem3}
        \end{align}
        
\subsection{Convergence rate}
        From the inequalities \eqref{preparing_bias_square_Gaussian_Theorem3} and \eqref{preparing_second_moment_Gaussian_Theorem3} we can conclude that Assumption \ref{Assumption_3} holds with the choice
        \begin{equation}\label{parameters_Ass:3_Theorem3_Gaussian}
            \sigma^2 = \mathcal{O} \left( \gamma^2 d^3 + \frac{d \Delta^2}{\gamma^2} \right), \;\;\;  M = \mathcal{O} \left( d \right).
        \end{equation}
        and that Assumption \ref{Assumption_4} also holds with the choice
        \begin{equation}\label{parameters_Ass:4_Theorem3_Gaussian}
            m=0, \zeta^2 = \mathcal{O} \left(\gamma^2 d^{3} \right) .
        \end{equation} 
        
        Now to find the asymptote to which the counterpart of Algorithm \ref{algo: ZO-MB-SGD_algorithm} converges with the approximation \eqref{Gaussian_approximation_grad_Theorem_3}, substitute the parameters \eqref{parameters_Ass:3_Theorem3_Gaussian}, \eqref{parameters_Ass:4_Theorem3_Gaussian} into the second and third terms~\eqref{Itog_theorem_all}~with~$\eta = \mathcal{O} \left( 1/M \right)$:
        \begin{equation*}
            \mathbb{E}[f(x_N)] - f^* \leq  \frac{ \zeta^2}{2 \mu (1- m)}  + \frac{\eta L_2 \sigma^2}{2 B \mu (1- m)} = \mathcal{O} \left( \gamma^2 d^{3} + \frac{\gamma^2 d^2}{B} + \frac{\Delta^2}{B \gamma^2} \right).
        \end{equation*}
        
         Since the batch size $B$ can be taken large enough, there are no restrictions on the smoothing parameter~$\gamma$. Therefore, using the concept of the zero-order oracle \eqref{zero_order_oracle_3} the gradient-free counterpart of ZO-MB-SGD (see Algorithm \ref{algo: ZO-MB-SGD_algorithm}) with gradient approximation \eqref{Gaussian_approximation_grad_Theorem_3} can achieve the desired accuracy.

\section{Gaussian smoothing approach for Theorem \ref{Theorem:combine}}\label{Gaussian_Proof_Theorem4}
    \subsection{Definition Gaussian approximation}
    Let $\uu \sim \mathcal{N}(0,1)$ is a random Gaussian vector, $\gamma>0$ is smoothing parameter then Gaussian smoothing approximation has the following~form:
    \begin{equation}
        \label{Gaussian_approximation_grad_Theorem_4}
        \Tilde{\gg}(x,\xi,\uu) = \frac{\Tilde{f}(x + \gamma \uu, \xi_1) - \Tilde{f}(x, \xi_2)}{\gamma} \uu,
    \end{equation}
    where $\Tilde{f}$ is defined in \eqref{zero_order_oracle_4}.
    \subsection{Bias square}
    By definition Gaussian approximation \eqref{Gaussian_approximation_grad_Theorem_4} we have
        \begin{align*}
            \|\bb(x) \|_2 & = \phantom{88} \left\| \mathbb{E}[\Tilde{\gg}(x,\xi,\uu)] - \nabla f(x) \right\|_2 
            \nonumber\\ 
            &= \phantom{88}
            \left\| \frac{1}{\gamma} \mathbb{E} \left[ \left( \Tilde{f}(x + \gamma \uu, \xi) - \Tilde{f}(x, \xi) \right) \uu \right] - \nabla f(x)\right\|_2 
            \nonumber\\ 
            &\!\! \overset{\eqref{zero_order_oracle_4}}{=} \phantom{8}\;\,
            \left\| \frac{1}{\gamma} \mathbb{E}_{\uu} \left[  \left( f(x + \gamma \uu) - f(x) + \delta(x + \gamma \uu) - \delta(x) + \xi_1 - \xi_2 \right) \uu \right] - \nabla f(x)\right\|_2  
            \nonumber\\ 
            &= \phantom{88}
            \left\| \frac{1}{\gamma} \mathbb{E}_{\uu} \left[ \left( f(x + \gamma \uu) - f(x) - \gamma \dotprod{\nabla f(x)}{\uu} + \delta(x + \gamma \uu) - \delta(x) \right) \uu  \right]\right\|_2
            \nonumber\\ 
            & \leq  \phantom{88}
            L_2 \gamma \mathbb{E}_{\uu} \left\| \uu \right\|_2^3 + \frac{\Delta}{\gamma} \mathbb{E}_{\uu} \left\|\uu \right\|_2 \qquad /* \mbox{ the distribution of $\uu$ is symmetric  } */ \nonumber \\
            & \!\!\!\!\! \overset{\eqref{2_moment_for_u}, \eqref{p_moment_for_u}}{\leq}
            L_2 \gamma d^{3/2} + \frac{\Delta}{\gamma} d^{1/2}.
        \end{align*}
        Then we can find the bias square:
        \begin{equation}\label{preparing_bias_square_Gaussian_Theorem4}
            \|\bb(x) \|_2^2 = \left\| \mathbb{E}[\mathbf{\Tilde{g}}(x, \xi,\uu)] - \nabla f(x) \right\|^2_2 \leq \left(L_2 \gamma d^{3/2} + \frac{\Delta}{\gamma} d^{1/2} \right)^2 \overset{\eqref{Squared_norm_of_the_sum}}{\leq} 2 L_2^2 \gamma^2 d^{3} + 2 \frac{\Delta^2}{\gamma^2} d.
        \end{equation}
        
        \subsection{Second moment}
        By definition \eqref{Gaussian_approximation_grad_Theorem_4} we have
        \begin{align}
            \mathbb{E} \| \Tilde{\gg}(x,& \xi,\uu) \|^2_2
            \nonumber \\
            &\, = \; \phantom{888}
            \mathbb{E} \left[ \| \frac{1}{\gamma} \left( \Tilde{f}(x + \gamma \uu, \xi) - \Tilde{f}(x, \xi) \right) \uu \|^2_2 \right] 
            \nonumber \\
            &\overset{\eqref{zero_order_oracle_2}}{\leq} \phantom{888}
            \frac{1}{\gamma^2} \mathbb{E} \left[ \left( f(x + \gamma \uu) - f(x) + \delta(x + \gamma \uu) - \delta(x) + \xi_1 - \xi_2 \right)^2 \| \uu\|^2_2 \right] \nonumber \\
            &\, \leq \, \phantom{888}
            \frac{1}{\gamma^2}\mathbb{E} \left[ \left( f(x + \gamma \uu) - f(x) \pm \gamma \dotprod{\nabla f(x)}{\uu} + \delta(x + \gamma \uu) - \delta(x) + \xi_1 - \xi_2 \right)^2 \| \uu\|^2_2 \right]
            \nonumber \\
            &\overset{\eqref{Squared_norm_of_the_sum}}{\leq} \phantom{888}
            \frac{3}{\gamma^2}  L_2^2 \gamma^4 \mathbb{E}_\uu [ \| \uu\|^6_2]  + 3  \mathbb{E}_\uu \left[  \dotprod{\nabla f(x, \xi)}{\uu}^2 \| \uu\|^2_2 \right] + \frac{3 ( \Delta^2 + \Tilde{\Delta}^2)}{\gamma^2} \mathbb{E}_{\uu} \left[\| \uu\|^2_2 \right] 
            \nonumber \\
            &\!\!\!\!\!\!\!\! \overset{\eqref{2_moment_for_u},\eqref{p_moment_for_u}, \eqref{Fact_for_second_moment_Gaussian}}{\leq}
            3 L_2^2 \gamma^2 d^3 + 3 d \| \nabla f(x) \|_2^2 + \frac{3 d ( \Delta^2 + \Tilde{\Delta}^2)}{\gamma^2}. \label{preparing_second_moment_Gaussian_Theorem4}
        \end{align}
        
\subsection{Convergence rate}
        From the inequalities \eqref{preparing_bias_square_Gaussian_Theorem4} and \eqref{preparing_second_moment_Gaussian_Theorem4} we can conclude that Assumption \ref{Assumption_3} holds with the choice
        \begin{equation}\label{parameters_Ass:3_Theorem4_Gaussian}
            \sigma^2 = \mathcal{O} \left( \gamma^2 d^3 + \frac{d (\Delta^2 + \Tilde{\Delta}^2)}{\gamma^2} \right), \;\;\;  M = \mathcal{O} \left( d \right).
        \end{equation}
        and that Assumption \ref{Assumption_4} also holds with the choice
        \begin{equation}\label{parameters_Ass:4_Theorem4_Gaussian}
            m=0, \zeta^2 = \mathcal{O} \left(\gamma^2 d^{3} + \frac{\Delta^2}{\gamma^2} d \right) .
        \end{equation} 
        
        Now to find the asymptote to which the counterpart of Algorithm \ref{algo: ZO-MB-SGD_algorithm} converges with the approximation \eqref{Gaussian_approximation_grad_Theorem_4}, substitute the parameters \eqref{parameters_Ass:3_Theorem4_Gaussian}, \eqref{parameters_Ass:4_Theorem4_Gaussian} into the second and third terms~\eqref{Itog_theorem_all}~with~$\eta = \mathcal{O} \left( 1/M \right)$:
        \begin{equation*}
            \mathbb{E}[f(x_N)] - f^* \leq  \frac{ \zeta^2}{2 \mu (1- m)}  + \frac{\eta L_2 \sigma^2}{2 B \mu (1- m)} = \mathcal{O} \left( \gamma^2 d^{3} + d \frac{\Delta^2}{\gamma^2}  + \frac{\gamma^2 d^2}{B} + \frac{ \Delta^2 + \Tilde{\Delta}^2}{B\gamma^2} \right).
        \end{equation*}
        
        Since $B$ can be taken as large, the first two terms are responsible for the asymptote. We find the optimal smoothing parameter~$\gamma$ that minimizes the first two terms:
        \begin{eqnarray}
             \mathbb{E}[f(x_N)] - f^* \leq \gamma^2 d^{3} + d \frac{\Delta^2}{\gamma^2} 
            = \mathcal{O} \left(d^2 \Delta  \right), \label{assimptota_theorem4_Gaussian}
        \end{eqnarray}
        where $\gamma = \Delta^{1/2} d^{-1/2}$  is optimal smoothing parameter.
        
        Then from \eqref{assimptota_theorem4_Gaussian} we can find the maximum noise level, assuming that $\Delta d^2 \leq \varepsilon$, for $\varepsilon > 0$~then~we~have
        \begin{equation*}
             \Delta = \mathcal{O}\left( \frac{\varepsilon}{d^2} \right) .
        \end{equation*}




\end{document}